\newcommand{\Diag}{\hbox{Diag}}
\def\NAT@def@citea{\def\@citea{\NAT@separator}}
\theoremstyle{plain}
\newtheorem{theorem}{Theorem}[section]
\newtheorem{lemma}[theorem]{Lemma}
\newtheorem{proposition}[theorem]{Proposition}
\theoremstyle{definition}
\newtheorem{definition}[theorem]{Definition}
\newtheorem{assumption}[theorem]{Assumption}
\theoremstyle{remark}
\begin{document}
	
	
\title{An Efficient Smoothing Damped Newton Method for Large-Scale Mathematical Programs with Equilibrium Constraints}
	
\author{
\name{Yixin Wang\textsuperscript{a}\thanks{Email: eathelynwang@outlook.com} and 
Qingna Li\textsuperscript{a,b}\thanks{Email: qnl@bit.edu.cn. Corresponding author. This author’s research is supported by the National Science Foundation of China (NSFC) 12071032 and 12271526.} and 
Liwei Zhang\textsuperscript{c}\thanks{Email: zhanglw@mail.neu.wdu.cn. This author’s research is supported by the National Natural Science Foundation of China (NSFC) 12371298.}}
\affil{\textsuperscript{a}School of Mathematics and Statistics, Beijing Institute of Technology, Beijing, China; \textsuperscript{b}Beijing Key Laboratory on MCAACI/Key Laboratory of Mathematical Theory and Computation in Information Security, Beijing Institute of Technology, Beijing, China; \textsuperscript{c}National Frontiers Science center for Industrial Intelligence and Systems Optimization, Northeastern University, 110819, Liaoning, China.}
	}
	
\maketitle
	
\begin{abstract}
Bilevel hyperparameter optimization has received growing attention thanks to the fast development of machine learning. Due to the tremendous size of data sets, the scale of bilevel hyperparameter optimization problem could be extremely large, posing great challenges in designing efficient numerical algorithms. In this paper, we focus on solving the large-scale mathematical programs with equilibrium constraints (MPEC) derived from hyperparameter selection of L1 support vector classification (L1-SVC). We propose a highly efficient smoothing damped Newton method (SDNM) for solving such MPEC. Compared with most existing algorithms where subproblems are solved by packages, our approach fully takes advantage of the structure of MPEC and therefore is package-free. Moreover, the proposed SDNM converges to C-stationary point under MPEC-LICQ with subproblem enjoys a quadratic convergence rate under proper assumptions. Extensive numerical results over LIBSVM dataset show the superior performance of SDNM over other state-of-art algorithms.
\end{abstract}
	
\begin{keywords}
Support vector classification, Hyperparameter selection, Bilevel optimization, Mathematical program with equilibrium constraints, Smoothing damped Newton method
\end{keywords}

\section{Introduction}

Bilevel hyperparameter optimization has received growing attention thanks to the fast development of machine learning. Due to the tremendous size of data sets, the scale of bilevel hyperparameter optimization problem could be extremely large, posing great challenges in designing efficient numerical algorithms. Having been one of the most active research areas, vast literature concerns bilevel hyperparameter optimization (BHO) in machine learning. We refer to \cite{dempe2002foundations,petrulionyte2024functional,yu2020hyper,zhang2024introduction}
for surveys and monographs. Below we focus on numerical algorithms for BHO in support vector machine (SVM), which is most relevant to our research in this paper.

We start with an early work by Mangasarian \cite{Mangasarian1994}, which provides a foundation for an equivalent reformulation (linear-programming problem with complementarity constraints, LPCC) of the so-called counting loss $\|(\cdot)_+\|_0$. 
The story begins with Bennett et al.'s work \cite{Bennett_01}, where they proposed a bilevel optimization framework for hyperparameter selection problem in SVM. By replacing the lower-level problem by KKT conditions to get a single-level problem (which is a mathematical program with equilibrium constraints, MPEC), Bennett et al. proposed a successive linearization method to solve such MPEC,  with subproblem solved by CPLEX. 
Meanwhile, with the equivalent LPCC reformulation of counting loss function, Kunapuli et al. \cite{Kunapuli_02} studied BHO for $L_1$-regularized support vector classification ($L_1$-SVC), with counting loss and $L_1$ loss as upper-level and lower-level functions, respectively. They used FILTER package to solve the resulting MPEC. 
Later work includes rectangle search algorithm by Lee et al. \cite{Lee_03} to solve the resulting single-level LPCC with subproblem solved by CPLEX, stochastic gradient method by Couellan and Wang \cite{Couellan_04} for the resulting single-level MPEC (no subproblem involved), and Scholtes global relaxation method \cite{Li_07,Li_05,Li_06} for MPEC with subproblem solved by SNOPT \cite{snopt}.

Different from above existing work, Wang and Li \cite{Wang_08} studied BHO for Logistic-loss SVC with Logistic loss as upper-level and lower-level loss function. Different from BHO for $L_1$-SVC and $L_2$-SVC, the resulting single-level problem is a nonlinear programming problem (NLP) with only equality constraints. A smoothing Newton method is proposed to solve the KKT conditions of such NLP, which does not involve subproblem. Very recently, Ye et al. \cite{Ye_09} proposed a difference-of-convex algorithm for BHO in $L_1$-SVM with $L_1$-loss upper-level function, with subproblem solved by SDPT3. Similar idea is also studied in \cite{Ye_10,Ye_11}. Qian et al. \cite{Li2023GlobalRL} proposed an $L_p$-Newton method to solve the single MPEC in \cite{Kunapuli_02}, with subproblem solved by Gurobi.

To summarize, we obtain the following observations. 
(i) Traditional single-level problem derived from BHO for SVM is based on replacing lower problem by KKT conditions, and leads to an MPEC.
(ii) Most algorithms for solving such MPEC are two-loop, where subproblems are often solved by software packages. However, calling software packages has several drawbacks. Firstly, it ignores the structure of the subproblem, therefore, may not be as efficient as specially designed algorithms for subproblems and may take longer time in solving subproblems. 
Secondly, recalling the large-scale of MPEC due to the huge size of data sets, the speed of the algorithms for BHO in SVM is still far from satisfactory. For example, based on our computational experience in \cite{Wang_08}, the CPU time for the Scholtes global relaxation method (SGRM) in \cite{Li_05} is over one hour when the number of features of the dataset is over 100.

Based on the above analysis, together with our successful experience in designing smoothing Newton method to solve NLP derived from BHO for Logistic-loss SVC, a natural question is whether it is possible to design a specialized algorithm to solve MPEC derived from BHO for $L_1$-SVC. This motivates the work in this paper. 
The contribution of the paper is summarized in three folds. 
Firstly, we propose a smoothing method to solve large-scale MPEC derived from BHO for $L_1$-SVC and the global convergence is established under reasonable assumptions.
Secondly, to solve the subproblem, we design the damped Newton method, and analyze the first and second order optimality conditions. Under certain assumptions, we show the global convergence and local quadratic convergence rate.
Finally, we conduct extensive numerical experiments to verify the efficiency of the proposed smoothing damped Newton method, especially the superior performance in speed.

The paper is organized as follows. 
In Section 2, we briefly restate the BHO for $L_1$-SVC and the resulting single-level MPEC in \cite{Li_05}, which is the problem that we will solve in this paper. 
In Section 3, we introduce the smoothing approach to deal with complementarity constraints in MPEC and analyze the convergence of the smoothing method.
In Section 4, we analyze the perturbation property of the feasible set of MPEC.
In Section 5, we propose the damped Newton method to solve the subproblem, and establish its convergence results under certain assumptions.
In Section 6, we conduct extensive numerical experiments to verify the high efficiency of the proposed smoothing damped Newton method (SDNM), especially the superior performance in speed.
Finally, we conclude the paper in Section 7.

\vskip 2mm

\noindent {\bf Notations.} 
We use $\mathcal{I}$ to denote the identity matrix.
We use Diag$(a_1,\cdots,a_n)$ to denote the diagonal matrix with diagonal blocks $a_1,\cdots,a_n$.
Let $[n]$ denote $\{1,\cdots,n\}$ for positive integer $n$.

\section{Large-Scale MPEC: A Motivating Example}\label{sec2}

In this paper, we focus on the following MPEC:
\begin{equation*} \label{pb_mpec}\tag{MPEC}
	\begin{array}{cl}
		\min\limits_{ v \in\mathbb{R}^{m+1}}& f( v )\\
		\hbox{s.t.}& G(v)\ge0,\ H(v)\ge 0,\ G(v)^\top H(v)=0.
	\end{array}
\end{equation*}
where $f: \mathbb{R}^{m+1} \to \mathbb{R}$, $G: \mathbb{R}^{m+1} \to \mathbb{R}^{m}$, $H: \mathbb{R}^{m+1} \to \mathbb{R}^{m}$ are affine functions. Such form of \eqref{pb_mpec} is motivated by the single-level reformulation of BHO for $L_1$-SVC, which we will briefly introduce below. More details can be found in \cite{Li_05}. 


The bilevel model for hyperparameter selection in $L_1$-SVC is based on cross-validation (CV). Consider a dataset $\Omega \in \left\lbrace \left( x_i, y_i\right) \right\rbrace _{i=1}^{p_1} \subseteq \mathbb{R}^{n+1}$, where $x_i\in \mathbb{R}^n$ denotes a data point and $y_i\in\left\lbrace \pm 1 \right\rbrace $ is the corresponding label. CV evaluates the performance of $L_1$-SVC by dividing the data set into $T$ folds of equal size, denote as $\Omega_t,\ t\in[T]$. 
In the $t$-th lower-level problem, given $C \geq 0$, we train the $t$-th fold training set $\overline{\Omega}_{t}:=\Omega \backslash\Omega_t$ by the soft-margin SVC model with the $L_1$-loss function\footnote{We choose the $L_1$-SVC model as the typical lower-level problem due to the following reasons. Firstly, from the practical perspective, the $L_1$-SVC model is a widely used statistical model in machine learning \cite{dedieu2022solving,yan2020efficient,zheng2023l1}. 
Secondly, the $L_1$-SVC model is more challenging to tackle than the $L_2$-SVC model due to the nonsmoothness of the $L_1$-loss function.}.
The aim of the upper-level problem is to minimize the $T$-fold cross-validation error (CV error) measured on the validation sets based on the optimal solutions of the lower-level problems. Specifically, the basic bilevel optimization model for selecting the hyperparameter $C$ in $L_1$-SVC is formulated as
\begin{equation}\small \label{pb_bilevel}
\begin{array}{cl}
\min \limits_{C \in \mathbb{R},\ w^{t} \in \mathbb{R}^{n},\ t=1, \cdots,T} & \frac{1}{T}\sum\limits_{t=1}^{T} \frac{1}{m_{1}} \sum\limits_{i \in \mathcal{N}_{t}} \| \max\left(0, -y_{i} \left( x_{i}^{\top} w^{t}\right) \right)\|_{0}\\
\hbox{s.t.} \ & C \geq 0,\\
& \text{and for} \ t=1,\cdots,T: \\
& \qquad w^{t} \in \underset{ w \in \mathbb{R}^{n}}{\operatorname{argmin}} \left\{\frac{1}{2}\|w\|_{2}^{2}+ C \sum\limits_{i \in \overline{\mathcal{N}_{t}}}  \max \left( 0,1-y_{i}\left(x_{i}^{\top}w\right)\right)
\right\},
\end{array}
\end{equation}
where $\mathcal{N}_{t}$ and $\overline{\mathcal{N}_{t}}$ denote the index set of $\Omega_t$ and $\overline{\Omega_t}$ respectively, and $m_1$ and $m_2$ are the numbers of data points in $\Omega_t$ and $\overline{\Omega_t}$. 
Here, the expression $\| \max\left(0, -y_{i} \left( x_{i}^{\top} w^{t}\right) \right)\|_{0}$ basically counts the number of data points that are misclassified in the validation set $\Omega_{t}$, while the outer summation (i.e., the objective function in \eqref{pb_bilevel}) averages the misclassification error over all the folds.

Note that each lower-level problem is strongly convex, one can replace the lower-level problem by its KKT conditions after introducing slack variable $\zeta^t\in \mathbb{R}^{m_2},\ t = 1,\cdots,T$. For the upper level objective function in \eqref{pb_bilevel}, one can introduce the technique in \cite{Mangasarian1994} to rephrase it as a linear programming problem with complementarity constraints\footnote{To make the problem well-defined, we assume that $x_i^\top w^t\neq 0$, for all $i\in\mathcal{N}_t$ amd $t\in [T]$.}. The above two strategies lead to the following MPEC reformulation of \eqref{pb_bilevel} (see \cite{Li_05} for more details),
\begin{equation} \label{pb_mpec_ori}
	\begin{array}{cl}
		\min\limits_{\begin{subarray}{c}
				C \in \mathbb{R} \\
				\zeta \in \mathbb{R}^{Tm_{1}},\ z \in \mathbb{R}^{Tm_{1}}\\
				\alpha \in \mathbb{R}^{Tm_{2}},\ \xi \in \mathbb{R}^{Tm_{2}}
		\end{subarray}}  &\frac{1}{Tm_{1}} \mathbf{1}^{\top} \zeta\\
		\hbox{s.t.}& \mathbf{0} \leq  \zeta \perp  A B^{\top} \alpha+z  \geq \mathbf{0}, \\
			&\mathbf{0} \leq  z  \perp  \mathbf{1}-\zeta  \geq \mathbf{0},  \\
			&\mathbf{0} \leq  \alpha  \perp  B B^{\top} \alpha- \mathbf{1}+\xi \geq \mathbf{0},\\
			&\mathbf{0} \leq  \xi  \perp  C \mathbf{1}-\alpha  \geq \mathbf{0},
	\end{array}
\end{equation}
where $\textbf{1}$ is the vector with all elements one, and
\begin{equation*}\small
\begin{array}{c}
A:={\rm Diag} \left( A^1,\ A^2,\cdots,A^n\right) \in\mathbb{R}^{Tm_1\times Tn},\
B:={\rm Diag} \left( B^1,\ B^2,\cdots,B^n\right) \in\mathbb{R}^{Tm_1\times Tn},
\\ 
A^t:=\left[\begin{array}{c}y_{t_1}x_{t_1}^\top\\\vdots\\y_{t_{m_1}}x_{t_{m_1}}^\top\end{array}\right]\in\mathbb{R}^{m_1\times n},\ B^t:=\begin{bmatrix}y_{t_{m_1+1}}x_{t_{m_1+1}}^\top\\\vdots\\y_{t_{p_1}}x_{t_{p_1}}^\top\end{bmatrix}\in\mathbb{R}^{m_2\times n},\ t\in[T],
\\ 
{\mathsmaller{\zeta:=\left[\begin{array}{l} \zeta^{1} \\ \zeta^{2} \\ \vdots \\ \zeta^{T}\end{array}\right] \in\mathbb{R}^{Tm_{1}},\  
	z:= \left[\begin{array}{l} z^{1} \\ z^{2} \\ \vdots \\ z^{T}\end{array}\right]\in\mathbb{R}^{Tm_{1}},\ \alpha:=\left[\begin{array}{l}\alpha^1\\\alpha^2\\\vdots\\\alpha^T\end{array}\right]\in\mathbb{R}^{Tm_{2}},\
	\xi:=\left[\begin{array}{l}\xi^1\\\xi^2\\\vdots\\\xi^T\end{array}\right]\in\mathbb{R}^{Tm_{2}}.}}
\end{array}
\end{equation*}

Therefore, \eqref{pb_mpec_ori} can be written into a compact form of \eqref{pb_mpec}, where $m = 2T(m_1+m_2),\  f( v ):= \frac{1}{Tm_{1}} \mathbf{1}^{\top} \zeta$, and $G( v )\in \mathbb{R}^m,\ H( v )\in \mathbb{R}^m,\ v\in\mathbb{R}^{m+1}$ are defined by
\begin{equation} \label{def_G_H_v} 
	G( v ):=\left[\begin{array}{c} \zeta\\ z\\ \alpha\\ \xi \end{array}\right],\ 
	H( v ):=\left[\begin{array}{c} AB^\top\alpha+z\\ \mathbf{1}-\zeta\\ BB^\top\alpha-\mathbf{1}+\xi\\ C\mathbf{1}-\alpha \end{array}\right],\ 
	 v :=\left[\begin{array}{c}C\\ \zeta\\ z\\ \alpha\\ \xi \end{array}\right].
\end{equation}

Specifically, one can notice that the number of components in $v$ and the number of complementarity constraints are $m+1$ and $m$, respectively, where $m=2T(m_1+m_2)$, $m_1$ and $m_2$ are the number of training data and validation data, respectively. When the number of cross validation or the number of dataset increases, it will lead to the increase of the scale of \eqref{pb_mpec}. Solving such a large-scale bilevel hyperparameter selection problem for SVC is extremely challenging, in which the cross-validation process increases the scale of the problem exponentially.

We close this section by the following definitions of constraint qualification and stationary point of \eqref{pb_mpec}. Let $v^*$ be a feasible point for \eqref{pb_mpec}, define the following index sets:
\begin{equation*}\begin{array}{ll}
I_{0+}(v^*) &= \{i \in [m] : G_i(v^*) = 0, \, H_i(v^*) > 0\}, \\
I_{+0}(v^*) &= \{i \in [m] : G_i(v^*) > 0, \, H_i(v^*) = 0\}, \\
I_{00}(v^*) &= \{i \in [m] : G_i(v^*) = 0, \, H_i(v^*) = 0\}.
\end{array} \end{equation*}

We usually use $I_{0+}$ instead of $I_{0+}(v^*)$ when the dependence of $I_{0+}$ on $v^*$ is very clear. It is the same with $I_{+0}$ and $I_{00}$.

\begin{definition} \cite[page 920]{jiani30} (MPEC-LICQ) A feasible point $v^*$ for problem \eqref{pb_mpec} satisfies the MPEC-LICQ if and only if the set of gradient vectors
\begin{equation*}
\left\{ \nabla G_i(v^*) \mid i \in I_{0+} 
\cup I_{00} \right\} \cup \left\{ \nabla H_i(v^*) \mid i \in I_{+0} \cup I_{00} \right\} 
\end{equation*}
is linearly independent.
\end{definition}

\begin{definition}
Let $v^*$ be feasible for \eqref{pb_mpec}. Then $v^*$ is said to be\\
(a) weakly stationary, if there are multipliers $\gamma$, $\nu \in \mathbb{R}^m$ such that
\begin{equation*}
    \nabla f(v^*) - \sum_{i=1}^m \gamma_i \nabla G_i(v^*)- \sum_{i=1}^m \nu_i \nabla H_i(v^*) = 0
\end{equation*}
and $\gamma_i = 0\ (i \in I_{+0})$, $\nu_i = 0\ (i \in I_{0+})$;\\
(b) C-stationary, if it is weakly stationary and $\gamma_{i} \nu_{i} \geq 0$ for all $i \in I_{00}$.
\end{definition}

\section{Solution Method}
To solve \eqref{pb_mpec}, traditional methods include relaxation method \cite{Hoheisel2013,kadrani2009new,kanzow2013new,lin2005modified,Scholtes2001,steffensen2010new}, duality method \cite{li2023novel,li2024solving}, exact penalization approach \cite{scholtes1999exact}, stochastic method \cite{patriksson1999stochastic,shapiro2008stochastic}. We refer to \cite{dempe2003annotated} for review solving \eqref{pb_mpec}.
Keeping in mind that our aim is to design an efficient numerical algorithm to solve \eqref{pb_mpec}, which is software-independent, the complementarity constraints are the first issue that we want to tackle. 
Smoothing technique has been widely used in solving many problems such as sparse optimization problems \cite{bian2020smoothing,bian2023accelerated} and nonsmooth optimization problems \cite{liang2024squared}. 
In this section, we solve \eqref{pb_mpec} by introducing smoothing functions to deal with the complementarity constraints in \eqref{pb_mpec}. We will also investigate the relation of the smoothing problem and \eqref{pb_mpec}.

In \eqref{pb_mpec}, $0\le H( v )\perp G( v )\ge0$ are linear complementarity constraints. Therefore, from the optimization point of view, there are various methods to tackle complementarity constraints to their smoothing equivalent system to make the computation process more efficient \cite{gharbia2023semismooth,luo1996mathematical,nguyen2025smoothing,ochs2016techniques}. 
Different smoothing functions have been proposed and studied \cite{chen2000penalized,chen2007some,FB0}. 
Here we choose the popular Fischer-Burmeister (FB) smoothing function \cite{FB0}, which is defined by 
\begin{equation*}
   \phi_\epsilon(a,b)=a+b-\sqrt{a^2+b^2+\epsilon^2},\quad a,b\in\mathbb{R}. 
\end{equation*}
$\phi_\epsilon(a,b)=0$ if and only if $a>0,b>0, ab=\frac{\epsilon^2}{2}$.
For $s\in\mathbb{R}^m,\ t\in\mathbb{R}^m$, define 
\begin{equation*}
\Phi_\epsilon(s,t)=\left[\phi_\epsilon(s_1,t_1),\ \cdots, \ \phi_\epsilon(s_m,t_m) \right]^\top\in\mathbb{R}^m.
\end{equation*}

The FB smoothing function has many nice properties useful in algorithmic development for semidefinite complementarity problems and nonsmooth equations involving the semidefinite conic complementarity condition. It is known from \cite{FB1} that this function is strongly semismooth, which plays a fundamental role in the analysis of the quadratic convergence of Newton-type methods for solving such nonsmooth equation systems, see \cite{FB2} for instance. Furthermore, it is proved by \cite{FB3} that the gradient mapping of the squared norm of the FB smoothing function is Lipschitz continuous.

Applying $\Phi(\cdot,\cdot)$ to the complementarity constraints in \eqref{pb_mpec}, we reach the smoothing version of \eqref{pb_mpec} as follows ($\epsilon>0$), which is a essentially a nonlinear programming problem \eqref{pb_nlp} with equality constraints:
\begin{equation}\label{pb_nlp}\tag{\text{NLP$_\epsilon$}}
\begin{array}{cl}
	\min\limits_{ v \in\mathbb{R}^{m+1}}& f( v )\\
	\hbox{s.t.}& \Phi_\epsilon(G( v ), H( v ))=0.
\end{array}
\end{equation}

Here we would like to make a comparison between \eqref{pb_mpec} and \eqref{pb_nlp}. In terms of variables, the two problems have the same scale of variables, which is $m+1$. However, the number of constraints in \eqref{pb_nlp} is only one third of that in \eqref{pb_mpec}, which is much smaller. Moreover, the constraints in \eqref{pb_nlp} are equality constraints, which are much easier to deal with in designing algorithms.

Given $\{\epsilon_t\}\searrow0$, we solve (NLP$_{\epsilon_t}$) in each iteration. We reach the following smoothing method in Algorithm \ref{algo1} to solve \eqref{pb_mpec}.

\begin{algorithm}[htbp]
\caption{Smoothing Method for \eqref{pb_mpec}} \label{algo1}
\begin{algorithmic}[1]

\State Choose $\epsilon_0\ge\epsilon_{\min}>0$, $\kappa\in(0,1)$, $t:=0$.

\State \textbf{while} $\epsilon_t\ge\epsilon_{\min}$, \textbf{do}

\State \quad Solve (NLP$_{\epsilon_t}$) to get $v^{t+1}.$

\State \quad Let $\epsilon_{t+1}:=\kappa\epsilon_t$,  and $t:=t+1$. 

\State \textbf{end while}

\State \textbf{Return} the final iterate $v_{\rm opt}:=v^{t}$.

\end{algorithmic}
\end{algorithm}

Here we would like to highlight the advantage of the proposed algorithm. Firstly, it is software independent, which do not rely on other packages. Additionally, by smoothing the complementarity constraints, the number of constraints is reduced to one third, which is much smaller than that solved by other state-of-art methods, and also contributes to the speed up of our algorithm.

The following theorem shows that under the fulfillment of MPEC-LICQ, any accumulation point of the smoothing algorithm is a C-stationary point of \eqref{pb_mpec}.

\begin{theorem}\label{thm_C}
    Let $\{ \epsilon_t \} \searrow 0$ and let $v^{t}$ be a stationary point of (NLP$_{\epsilon_t}$) with $v^*$ be any accumulation point such that MPEC-LICQ holds at $v^*$. If LICQ holds at $v^{t}$ for each $t$, then $v^*$ is a C-stationary point of \eqref{pb_mpec}.
\end{theorem}

\begin{proof}
Since $v^{t}$ is a KKT point of (NLP$_{\epsilon_t}$) and LICQ holds at $v^{t}$ for each $t$, there exists unique multiplier $\lambda$ such that the following holds
\begin{equation}\label{eq_kkt_pf}
\begin{array}{ll}
\textbf{0} 
&= \nabla f(v^{t}) - \sum_{i=1}^m \lambda^{(t)}_i \nabla \phi_{\epsilon_t}(G_i(v^{t}),H_i(v^{t}))\\
&= \nabla f(v^{t})-\sum_{i=1}^m \lambda^{(t)}_i \left( w^G_i (v^{t}) \nabla G_i(v^{t}) +w^H_i(v^{t}) \nabla H_i(v^{t})\right),
\end{array}
\end{equation}
where $i\in[m]$, and $\forall \epsilon_t>0$,
\begin{equation*}
\begin{array}{c}
w^G_i(v^{t})=1-\dfrac{G_i(v^{t})}{\sqrt{G^2_i(v^{t}) +H_i^2(v^{t})+(\epsilon_t)^2}}\in(0,1],\\
w^H_i(v^{t})=1-\dfrac{H(v^{t})}{\sqrt{G^2_i(v^{t}) +H_i^2(v^{t})+(\epsilon_t)^2}}\in(0,1].
\end{array} 
\end{equation*}
Together with $\{\epsilon_t \}\searrow 0$, and that $v^*$ is an accumulation point of $v^{t}$, by taking subsequence if necessary, assume that $w^G_i(v^{t})\to a_i$, $w^H_i(v^{t})\to b_i$. By defining 
\begin{equation*}
\begin{array}{c}
I_{00}^{1G} = \{i\in I_{00}\ |\ a_i=0 \},\ I_{00}^{2G} = \{i\in I_{00}\ |\ a_i=1 \},\ I_{00}^{3G} = \{i\in I_{00}\ |\ a_i\in(0,1) \},\\
I_{00}^{1H} = \{i\in I_{00}\ |\ b_i=0 \},\ I_{00}^{2H} = \{i\in I_{00}\ |\ b_i=1 \},\ I_{00}^{3H} = \{i\in I_{00}\ |\ b_i\in(0,1) \}.
\end{array}
\end{equation*}
it holds that
\begin{equation*}
\begin{array}{cc}
 a_i\in\begin{cases}
    \{0\},\ i\in I_{+0},\\
    \{1\},\ i\in I_{0+},\\
    \{0\},\ i\in I_{00}^{1G},\\
    \{1\},\ i\in I_{00}^{2G},\\
    (0,1),\ i\in I_{00}^{3G},
\end{cases}
& b_i\in
\begin{cases}
    \{1\},\ i\in I_{+0},\\
    \{0\},\ i\in I_{0+},\\
    \{0\},\ i\in I_{00}^{1H},\\
    \{1\},\ i\in I_{00}^{2H},\\
    (0,1),\ i\in I_{00}^{3H}.
\end{cases}
\end{array} 
\end{equation*}

Assume for contradiction that the sequence $\left\{\lambda^{(t)}\right\}$ is unbounded, then one can find a subsequence $K$ such that the $\frac{\lambda^{(t)}}{\|\lambda^{(t)}\|}$ converges to some vector, denoted as $\hat{\lambda} \neq 0$.

Denote $\hat{I}_+:=\{i\mid \hat{\lambda}_i>0 \}$, $\hat{I}_-:=\{i\mid \hat{\lambda}_i<0 \}$, 
$\hat{I}_0:=\{i\mid \hat{\lambda}_i=0 \}$.
By taking subsequence if necessary, it holds that
\begin{equation*}
\dfrac{\lambda^{(t)}}{\|\lambda^{(t)}\|}  w^G_i(v^{t}) \to \hat{\gamma_i} = \hat{\lambda}_i a_i,\ \dfrac{\lambda^{(t)}}{\|\lambda^{(t)}\|}  w^H_i(v^{t}) \to \hat{\nu_i} = \hat{\lambda}_i b_i.
\end{equation*}
where
\begin{equation*}
\begin{array}{cc}
\hat{\gamma_i}=
\begin{cases}
    \hat{\lambda}_i>0,&i\in \hat{I}_+\cap (I_{0+}\cup I_{00}^{2G}),\\
    \hat{\lambda}_i a_i>0,&i\in  \hat{I}_+\cap I_{00}^{3G},\\
    \hat{\lambda}_i<0,&i\in \hat{I}_-\cap (I_{0+}\cup I_{00}^{2G}),\\
    \hat{\lambda}_i a_i<0,&i\in  \hat{I}_-\cap I_{00}^{3G},\\
    0,&i\in I_{+0}\cup I_{00}^{1G}\cup \hat{I}_{0},
\end{cases}&
\hat{\nu_i}=
\begin{cases}
    \hat{\lambda}_i>0,&i\in \hat{I}_+\cap (I_{+0}\cup I_{00}^{2H}),\\
    \hat{\lambda}_i b_i>0,&i\in  \hat{I}_+\cap I_{00}^{3H},\\
    \hat{\lambda}_i<0,&i\in \hat{I}_-\cap (I_{+0}\cup I_{00}^{2H}),\\
    \hat{\lambda}_i b_i<0,&i\in  \hat{I}_-\cap I_{00}^{3H},\\
    0,&i\in I_{0+}\cup I_{00}^{1H}\cup \hat{I}_{0}.
\end{cases}
\end{array}
\end{equation*}
Therefore, taking limits for $t \to +\infty$ in \eqref{eq_kkt_pf} gives
\begin{equation*}
0=-\sum_{i\in I_{0+}\cup I_{00}^{2G}\cup I_{00}^{3G}} \hat{\gamma_i} \nabla G_i(v^*)-\sum_{i\in I_{+0}\cup I_{00}^{2H}\cup I_{00}^{3H}} \hat{\nu_i}\nabla H_i(v^*),
\end{equation*}
where $\hat{\gamma_i}\neq0, i \in i\in I_{0+}\cup I_{00}^{2G}\cup I_{00}^{3G}; \hat{\nu_i}\neq0, i\in I_{+0}\cup I_{00}^{2H}\cup I_{00}^{3H}$. 
Therefore, it is a contradiction to the prerequisite that MPEC-LICQ holds at $v^*$, since $I_{00}^{2G}\cup I_{00}^{3G} \subseteq I_{00}$ and $I_{00}^{2H}\cup I_{00}^{3H} \subseteq I_{00}$. Thus, we may assume without loss of generality that one of the accumulation points of $\left\{\lambda^{k}\right\}$ is $\lambda^*$. The continuous differentiability of $f, G, H$ implies
\begin{equation}\label{eq_kkt_ws}
    \textbf{0}= \nabla f(v^*)-\sum_{i=1}^m \gamma_i^*\nabla G_i(v^*)-\sum_{i=1}^m \nu_i^*\nabla H_i(v^*),
\end{equation}
where $\gamma_i^{*} = \lambda^{*}_i a_i,$, $\nu_i^{*} = \lambda^{*}_i b_i$.

By defining $I_+^*$, $I_-^*$ and $I_0^*$ on $\lambda^*$ similar to that of $\hat{I}_+$, $\hat{I}_-$ and $\hat{I}_0$, one can obtain the following 
\begin{equation*}
\begin{array}{c}
\gamma_i^*=
\begin{cases}
    \lambda_i^*>0,&i\in I_+^*\cap (I_{0+}\cup I_{00}^{2G}),\\
    \lambda_i^* a_i>0,&i\in  I_+^*\cap I_{00}^{3G},\\
    \lambda_i^*<0,&i\in I_-^*\cap (I_{0+}\cup I_{00}^{2G}),\\
    \lambda_i^* a_i<0,&i\in  I_-^*\cap I_{00}^{3G},\\
    0,&i\in I_{+0}\cup I_{00}^{1G} \cup I_0^*,
\end{cases}\ 
\nu_i^*=
\begin{cases}
    \lambda_i^*>0,&i\in I_+^*\cap (I_{+0}\cup I_{00}^{2H}),\\
    \lambda_i^* b_i>0,&i\in  I_+^*\cap I_{00}^{3H},\\
    \lambda_i^*<0,&i\in I_-^*\cap (I_{+0}\cup I_{00}^{2H}),\\
    \lambda_i^* b_i<0,&i\in  I_-^*\cap I_{00}^{3H},\\
    0,&i\in I_{0+}\cup I_{00}^{1H} \cup I_0^*.
\end{cases}
\end{array}
\end{equation*}
It holds that
\begin{equation*}
0\le \gamma_i^* \nu_i^* =
\begin{cases}
    (\lambda_i^*)^2,& i\in (I_+^*\cup I_-^*) \cap (I_{0+} \cup I_{00}^{2G}) \cap (I_{+0} \cup I_{00}^{2H}),\\
    (\lambda_i^*)^2 a_i,&i\in (I_+^*\cup I_-^*) \cap I_{00}^{3G} \cap (I_{+0} \cup I_{00}^{2H}),\\
    (\lambda_i^*)^2 b_i,& i\in (I_+^*\cup I_-^*) \cap I_{00}^{3H} \cap (I_{0+} \cup I_{00}^{2G}),\\
    (\lambda_i^*)^2 a_i b_i,&i\in (I_+^*\cup I_-^*) \cap  I_{00}^{3G}\cap I_{00}^{3H},\\
    0,& {\rm otherwise.}
\end{cases}
\end{equation*}
 
Overall, based on the definition of C-stationary point, $v^*$ is a C-stationary point of \eqref{pb_mpec}. The proof is finished.
\end{proof}

Based on Theorem \ref{thm_C}, to guarantee the convergence to a C-stationary point,we need the MPEC-LICQ at accumulation point $v^*$, as well as the LICQ at $v^{t}$ for each $t$. The MPEC-LICQ is guaranteed when $\left |  I_{GH}\right |=1$ by the following theorem proposed in \cite{lijiani25}. The fulfillment of LICQ at each $v^{t}$ will be addressed in Section 5.

\begin{theorem}\label{LICQ_new}
Let  $v^*$  be a feasible point of problem \eqref{pb_mpec}.
\begin{itemize}
\item[$\left(i\right)$] If  $\left | I_{GH} \right | > 1 $, then MPEC-LICQ fails at $ v^* $.
\item[$\left(ii\right)$] If  $\left | I_{GH} \right | =0 $, then MPEC-LICQ holds at $ v^* $.
\item[$\left(iii\right)$] If $\left |  I_{GH}\right |=1$,   $(BB^{\top})_{(\Lambda_{3}^+, \Lambda_3^+)}$ is positive definite at $v^*$ and $\Gamma_{sub} \neq 0$, then MPEC-LICQ holds at $ v^* $, where
\begin{equation*}\footnotesize 
    \Gamma_{\mathrm{sub}}=\begin{bmatrix}(BB^\top)_{(I_{GH_4},\Lambda_{3}^c\cup\Lambda_{u})}\,\mathbf 1
     - (BB^\top)_{(I_{GH_4},\Lambda_{3}^+)}\,
    (BB^\top)_{(\Lambda_{3}^+,\Lambda_{3}^+)}^{-1}\,
    (BB^\top)_{(\Lambda_{3}^+,\Lambda_{3}^c\cup\Lambda_{u})}\,\mathbf 1\\
    (BB^\top)_{(I_{GH_3},\Lambda_{3}^c\cup\Lambda_{u})}\,\mathbf 1
    - (BB^\top)_{(I_{GH_3},\Lambda_{3}^+)}\,
    (BB^\top)_{(\Lambda_{3}^+,\Lambda_{3}^+)}^{-1}\,
    (BB^\top)_{(\Lambda_{3}^+,\Lambda_{3}^c\cup\Lambda_{u})}\,\mathbf 1
    \end{bmatrix}
\end{equation*} 	
and 
\begin{equation*}
    \begin{cases}
        \Lambda_{3}^{+}:=\{i\in Q_{l}\ |\ 0<\alpha_{i}<C,\ (BB^{\top}\alpha-\mathbf{1}+\xi)_{i}=0,\ \xi_{i}=0\},\\
        \Lambda_{3}^{c}:=\left\{i\in Q_{l}\ |\ \alpha_{i}=C,(BB^{\top}\alpha-\mathbf{1}+\xi)_{i}=0,\ \xi_{i}=0\right\},\\
        \Lambda_u:=\left\{i\in Q_l\ |\ \alpha_i=C,\ (BB^\top\alpha-\mathbf{1}+\xi)_i=0, \ \xi_i>0\right\},\\
        I_{GH_{3}}:=\{i\in Q_{l}\ |\ \alpha_{i}=0,\ (BB^{\top}\alpha-\mathbf{1}+\xi)_{i}=0\},\\
        I_{GH_4}:=\{i\in Q_l\ |\ \xi_i=0,\ C-\alpha_i=0\},\\        
        Q_l:=\{1,2,\cdots,Tm_2\}.
    \end{cases}
\end{equation*}
\end{itemize}
\end{theorem}

\section{Feasible Sets of \eqref{pb_nlp} and \eqref{pb_mpec}}

Before focusing on solving the smoothing subproblem \eqref{pb_nlp}, we need to investigate the feasible region of \eqref{pb_nlp} and \eqref{pb_mpec}. Due to the smoothing function in  \eqref{pb_nlp} and the positive smoothing parameter $\epsilon$, the feasible region of \eqref{pb_nlp} is no longer the same as that of \eqref{pb_mpec}. Therefore, what is the relation of feasible sets of \eqref{pb_nlp} and \eqref{pb_mpec}, as $\epsilon\searrow 0$? We address this question below.

Denote the feasible sets of \eqref{pb_mpec} and \eqref{pb_nlp} by
\begin{equation*}\Omega(\epsilon) =\{ v\in \mathbb{R}^{m+1} \mid \Phi_\epsilon(H( v ), G( v ))=0\},\ \Omega^0 =\{ v\in \mathbb{R}^{m+1} \mid 0\le G( v )\perp H( v )\ge0\}.\end{equation*}

Consider the following equivalent form of \eqref{pb_mpec} and \eqref{pb_nlp}
\begin{align}
&\begin{array}{cl}\label{pb_mpec_mat}
\min\limits_{ v \in\mathbb{R}^{m+1},p\in\mathbb{R}^{m}, q\in\mathbb{R}^m}&  f( v )\\	
\hbox{s.t.}& G( v )=p, \ H( v )=q,\ 0\le p\perp q\ge 0,
\end{array}\\
&\begin{array}{cl}\label{pb_nlp_mat}
\min\limits_{ v \in\mathbb{R}^{m+1},p\in\mathbb{R}^{m}, q\in\mathbb{R}^m}& f( v )
	\\	
	\hbox{s.t.}&  G( v )=p, \ H( v )=q,\ \Phi_\epsilon(p, q)=0.
\end{array}
\end{align}
Denote the feasible sets of \eqref{pb_mpec_mat} and \eqref{pb_nlp_mat} as $\Gamma^0$ and $\Gamma(\epsilon)$, which are given by 
\begin{equation} \label{def_gamma}
	\begin{aligned}
		&\Gamma^0:=\Gamma^=\cap \Gamma^\perp,\ \Gamma(\epsilon):=\Gamma^=\cap \Gamma^\perp(\epsilon),\ \Gamma^= :=\{( v ,p,q)\mid G( v )=p, \ H( v )=q\},\\
		&\Gamma^\perp:=\{( v ,p,q)\mid  0\le p\perp q\ge 0\},\ \Gamma^\perp(\epsilon):=\{( v ,p,q)\mid \Phi_\epsilon(p,q)=0\}.
	\end{aligned}
\end{equation}
We first address the relationship between $\underset{\epsilon\searrow 0}{\lim}\sup\Gamma(\epsilon)$ and $\Gamma^0$ in the following lemma.

\begin{lemma}\label{lem_1}
	Let $\Gamma(\epsilon)$ and $\Gamma^0$ be defined in \eqref{def_gamma}, it holds that $\underset{\epsilon\searrow 0}{\lim}\sup\Gamma(\epsilon)\subseteq \Gamma^0.$
\end{lemma}

\begin{proof}
For any $( v ,p,q)\in 	\underset{\epsilon\searrow 0}{\lim}\sup\Gamma(\epsilon)$, by definition, there exists $\epsilon_k\searrow 0$ and $( v^k,p^k,q^k)\to ( v ,p,q)$ with $( v^k,p^k,q^k)\in\Gamma(\epsilon_k)$. Next, we need to show that $( v ,p,q)\in\Gamma^0$.
To that end, by taking limits, we obtain that
\begin{equation*}G( v )=\underset{k\to\epsilon}\lim G( v^k) =\underset{k\to\epsilon}\lim p^k=p,\ H( v )=\underset{k\to\epsilon}\lim H( v^k) =\underset{k\to\epsilon}\lim q^k=q.\end{equation*}
Moreover, we have  $\underset{k\to\epsilon}\lim \Phi_\epsilon(p^k, q^k) =\underset{k\to\epsilon}\lim \frac{(\epsilon_k)^2}{2}=0,$
implying that $ 0\le G( v )\perp H( v )\ge 0$. This gives that $( v ,p,q)\in\Gamma^0$. The proof is finished.
\end{proof}

To show that $\Gamma^0\subseteq \underset{\epsilon\searrow 0}{\lim}\inf\Gamma(\epsilon)$, we define the following sets for each $( v ,p,q)\in\Gamma^0$: $I_{0+}^\Gamma=\{i\in m\mid p_i=0,\ q_i>0\}$, $I_{+0}^\Gamma=\{i\in m\mid p_i>0,\ q_i=0\}$, $I_{00}^\Gamma=\{i\in m\mid p_i=0,\ q_i=0\}$, and define $\theta_\epsilon(\iota)=\left( \theta_{\epsilon,1}(\iota_1), \theta_{\epsilon,2}(\iota_2), \cdots, \theta_{\epsilon,m}(\iota_m)\right) ^\top$ by
\begin{equation*} \label{eq_theta}
	\theta_{\epsilon,j}(\iota_j):=\begin{cases}
		\frac {\epsilon^2} {2\iota_j} - q_j, &\ j\in I_{0+}^\Gamma,\\
		\frac {\epsilon^2} {2\left( p_j + \iota_j \right) }, &\ j\in I_{+0}^\Gamma,\\
		\ \ \ \frac {\epsilon^2} {2\iota_j}, &\ j\in I_{00}^\Gamma.
	\end{cases}
\end{equation*}

The following assumption is also necessary.
\begin{assumption}\label{as_1}
	Given $( v ,p,q)\in\Gamma^0$, assume that for any  $\epsilon_k \searrow 0$, there exists $(\Delta C^k, \Delta p^k)$ such that 
	\begin{equation*} 
    \begin{cases}
        \Delta p^k\ge 0, \theta_{\epsilon_k}(\Delta p^k)\ge 0, (\Delta C^k, \Delta p^k,\theta_{\epsilon_k}(\Delta p^k))\to 0,\\
		L^H_{(:,1)}\Delta C^k+ L^H_{(:,2:n)}\Delta p^k=\theta_{\epsilon_k}(\Delta p^k).
    \end{cases}
	\end{equation*} 
\end{assumption}

With all these preparations, the result of $\underset{\epsilon\searrow 0}{\lim}\inf\Gamma(\epsilon)$ is given as follows.

\begin{lemma}\label{lem_2}
	Under Assumption \ref{as_1}, it holds that $\Gamma^0\subseteq \underset{\epsilon\searrow 0}{\lim}\inf\Gamma(\epsilon).$
\end{lemma}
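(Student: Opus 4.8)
The plan is to unwind the definition of the inner limit of the set-valued map $\epsilon\mapsto\Gamma(\epsilon)$ and to build, for an arbitrary vanishing sequence of smoothing parameters, a feasible sequence of \eqref{pb_nlpec_mat} converging to the prescribed point. Concretely, I would fix $( v ,p,q)\in\Gamma^0$ and an arbitrary sequence $\epsilon^k\searrow 0$, and then exhibit $( v ^k,p^k,q^k)\in\Gamma(\epsilon^k)$ with $( v ^k,p^k,q^k)\to( v ,p,q)$. Assumption \ref{as_1} is exactly what supplies the raw material: it hands me perturbations $(\Delta C^k,\Delta p^k)$ with $\Delta p^k\ge 0$, $\theta_{\epsilon^k}(\Delta p^k)\ge 0$, $(\Delta C^k,\Delta p^k,\theta_{\epsilon^k}(\Delta p^k))\to 0$, and the linear identity $L^H_{(:,1)}\Delta C^k+L^H_{(:,2:n)}\Delta p^k=\theta_{\epsilon^k}(\Delta p^k)$.

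The construction exploits the structure of $G$ and $H$ in \eqref{def_G_H_v}. Since $G( v )=L^G v $ simply reads off the block $(\zeta,z,\alpha,\xi)$ of $ v $ and ignores the $C$-coordinate, setting $ v ^k:= v +(\Delta C^k;\Delta p^k)$ yields $p^k:=G( v ^k)=p+\Delta p^k$. Because $H$ is affine, $H( v ^k)=H( v )+L^H(\Delta C^k;\Delta p^k)=q+L^H_{(:,1)}\Delta C^k+L^H_{(:,2:n)}\Delta p^k$, which by the linear identity in the assumption equals $q^k:=q+\theta_{\epsilon^k}(\Delta p^k)$. Thus $G( v ^k)=p^k$ and $H( v ^k)=q^k$ hold by construction, so $( v ^k,p^k,q^k)\in\Gamma^=$, and the convergence $( v ^k,p^k,q^k)\to( v ,p,q)$ is immediate from $(\Delta C^k,\Delta p^k,\theta_{\epsilon^k}(\Delta p^k))\to 0$.

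It then remains to check $( v ^k,p^k,q^k)\in\Gamma^\perp(\epsilon^k)$, i.e. $\Phi_{\epsilon^k}(p^k,q^k)=0$, which by the characterization of $\phi_\epsilon$ reduces to verifying $p^k_i>0$, $q^k_i>0$ and $p^k_i q^k_i=\tfrac{(\epsilon^k)^2}{2}$ for each $i$. This is where the definition of $\theta_\epsilon$ does its work, and I would carry out the three index cases separately. For $i\in I_{0+}$ one has $p^k_i=\Delta p^k_i$ and $q^k_i=q_i+\theta_{\epsilon^k,i}(\Delta p^k_i)=\tfrac{(\epsilon^k)^2}{2\Delta p^k_i}$; for $i\in I_{+0}$ one has $p^k_i=p_i+\Delta p^k_i$ and $q^k_i=\tfrac{(\epsilon^k)^2}{2(p_i+\Delta p^k_i)}$; and for $i\in I_{00}$ one has $p^k_i=\Delta p^k_i$ and $q^k_i=\tfrac{(\epsilon^k)^2}{2\Delta p^k_i}$. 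In every case the product collapses to $\tfrac{(\epsilon^k)^2}{2}$, exactly as the function $\theta_\epsilon$ was reverse-engineered to ensure.

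The delicate point I would treat carefully, rather than wave through, is strict positivity of the two factors. Positivity of $q^k_i$ is automatic from the explicit quotient forms above. Positivity of $p^k_i$ is clear on $I_{+0}$ since $p_i>0$ and $\Delta p^k_i\ge 0$; on $I_{0+}$ and $I_{00}$ it reduces to $\Delta p^k_i>0$, which, although not literally stated as a strict inequality in Assumption \ref{as_1}, is forced because $\theta_{\epsilon^k,i}$ divides by $\Delta p^k_i$ and is thus well-defined only when $\Delta p^k_i\neq 0$, combined with $\Delta p^k\ge 0$. Having confirmed feasibility for each $\epsilon^k$ together with convergence, and since $\epsilon^k\searrow 0$ was arbitrary, I conclude $( v ,p,q)\in\underset{\epsilon\searrow 0}{\lim}\inf\Gamma(\epsilon)$, and as $( v ,p,q)\in\Gamma^0$ was arbitrary the inclusion follows. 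I expect the genuine difficulty of the statement to be concentrated in Assumption \ref{as_1} itself---the existence of admissible perturbations compatible with the linear relation $L^H_{(:,1)}\Delta C^k+L^H_{(:,2:n)}\Delta p^k=\theta_{\epsilon^k}(\Delta p^k)$---while, given that assumption, the argument above is essentially a structured component-wise verification.
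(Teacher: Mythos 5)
Your proof is correct and follows essentially the same route as the paper's: both invoke Assumption \ref{as_1} to obtain $(\Delta C^k,\Delta p^k)$, set $\Delta q^k=\theta_{\epsilon^k}(\Delta p^k)$, exploit the affine structure of $G(v)=L^G v$ and $H(v)=L^H v+b^H$ to secure membership in $\Gamma^=$, and verify $p^k_i q^k_i=\tfrac{(\epsilon^k)^2}{2}$ by the same case analysis over $I_{0+}$, $I_{+0}$, $I_{00}$. If anything, your explicit treatment of the strict positivity of $p^k_i$ and $q^k_i$ (needed for $\phi_{\epsilon^k}(p^k_i,q^k_i)=0$) is slightly more careful than the paper, which only records $\Delta p^k\ge 0$, $\Delta q^k\ge 0$ and leaves the final verification implicit.
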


\begin{proof}
To proof the result, we need to show that for any $( v ,p,q)\in \Gamma^0$, $( v ,p,q)\in \underset{\epsilon\searrow 0}{\lim}\inf\Gamma(\epsilon)$. That is, for any $\epsilon_k\searrow 0$, there exists a sequence $( v^k,p^k,q^k)$, such that $\lim\limits_{k\to\infty}( v^k,p^k,q^k)\to ( v ,p,q),\ {\rm and}\  ( v^k,p^k,q^k)\in\Gamma(\epsilon_k)$, which is,
\begin{equation}\label{eq_limvpq}
	\lim\limits_{k\to\infty}( v^k,p^k,q^k)= ( v ,p,q),\ H( v^k)=q^k,\ G( v^k)=p^k, \ \Phi_\epsilon(p^k, q^k)=\frac{(\epsilon_k)^2}{2}.
\end{equation}
Equivalently, by denoting $\Delta v^k=v^k-v,\ \Delta p^k=p^k-p,\ \Delta q^k=q^k-q$, \eqref{eq_limvpq} is equivalent to the following 
\begin{subnumcases}{} 
		H( v +\Delta v^k)=q+\Delta q^k,\\ 
		G( v +\Delta v^k)=p+\Delta p^k, \\
		\Phi_\epsilon(p+\Delta p^k, q+\Delta q^k)=\frac{(\epsilon_k)^2}{2}\label{eq_phi}\\
		(\Delta v^k,\Delta p^k,\Delta q^k)\to(0,0,0),\ \epsilon_k\to0,
\end{subnumcases}
where \eqref{eq_phi} is equivalent to the following 
\begin{equation*}\Delta p^k\ge0,\ \Delta q^k\ge0,\  (p_j+\Delta p^k_j)(q_j+\Delta q^k_j)=\frac{(\epsilon_k)^2}{2}.\end{equation*}
Recall the definition of $G(\cdot)$ and $H(\cdot)$ in \eqref{def_G_H_v}. $G(v)$ and $H(v)$ can be written as $G(v)=L^Gv$, and $H(v)=L^Hv+b^H$. The above system reduces to the following
\begin{equation}\label{eq_vpq_k}
	\begin{cases}
		L^H( v +\Delta v^k)+ b^H = q+\Delta q^k,\\ 
		L^G( v +\Delta v^k) = p+\Delta p^k, \\
		\Delta p^k\ge0,\ \Delta q^k\ge0,\  (p_j+\Delta p^k_j)(q_j+\Delta q^k_j)=\frac{(\epsilon_k)^2}{2},\\
		(\Delta v^k,\Delta p^k,\Delta q^k)\to(0,0,0),\ \epsilon_k\to0.
	\end{cases}
\end{equation}
Notice that $( v ,p,q)$ is a feasible point, it holds that
\begin{equation*}
	\begin{cases}
		L^H v +b^H=q,\\ 
		L^G v =p,\\
		p_{I_{0+}^\Gamma}=0,\ p_{I_{+0}^\Gamma}>0,\  p_{I_{00}^\Gamma}=0,\\  
		q_{I_{0+}^\Gamma}>0,\ q_{I_{+0}^\Gamma}=0,\  q_{I_{00}^\Gamma}=0.	
	\end{cases}
\end{equation*}
On the other hand, we have 
\begin{eqnarray*}
	(p_j+\Delta p^k_j)(q_j+\Delta q^k_j)&=&p_jq_j	+p_j\Delta q_j^k+q_j\Delta p_j^k+\Delta p_j^k\Delta q_j^k \\
	&=&p_j\Delta q_j^k+q_j\Delta p_j^k+\Delta p_j^k\Delta q_j^k\ \ (p_jq_j=0)\\
	&=&
	\begin{cases}
		q_j\Delta p_j^k+\Delta p_j^k\Delta q_j^k, &  j\in I_{0+}^\Gamma,\\
		p_j\Delta q_j^k+\Delta p_j^k\Delta q_j^k, &  j\in I_{+0}^\Gamma,\\
		\Delta p_j^k\Delta q_j^k, &  j\in I_{00}^\Gamma.
	\end{cases}
\end{eqnarray*}
The above system in \eqref{eq_vpq_k} reduces to the following form 
\begin{subnumcases}{}
	L^H\Delta v^k=\Delta q^k,\label{eq_vpq_k_re1}\\ 
	L^G\Delta v^k=\Delta p^k, \\
	q_j\Delta p_j^k+\Delta p_j^k\Delta q_j^k= \frac{(\epsilon_k)^2}{2},   j\in I_{0+}^\Gamma,\\
	p_j\Delta q_j^k+\Delta p_j^k\Delta q_j^k= \frac{(\epsilon_k)^2}{2},  j\in I_{+0}^\Gamma,\\
	\Delta p_j^k\Delta q_j^k=	 \frac{(\epsilon_k)^2}{2},   j\in I_{00}^\Gamma, \label{eq_vpq_k_re5}\\
	\Delta p^k\ge0,\ \Delta q^k\ge0,  (\Delta v^k,\Delta p^k,\Delta q^k)\to (0,0,0) ,\ \epsilon_k\to0.
\end{subnumcases}
By Assumption \ref{as_1}, for any $\epsilon\searrow 0$, there exists $(\Delta \widehat{C^k}, \Delta \widehat{p^k})$, such that  $\Delta \widehat{p^k}\ge 0$, $\theta_{\epsilon_k}(\Delta\widehat{p^k})$, $(\Delta\widehat{C^k}, \Delta \widehat{p^k},\theta_{\epsilon_k}(\widehat{p^k}))\to 0$ and 
$L^H_{(:,1)}\Delta C^k+ L^H_{(:,2:n)}\Delta p^k=\theta_{\epsilon_k}(\Delta p^k)$.
By letting $\Delta \widehat{q^k}=\theta_{\epsilon_k}(\Delta \widehat{p^k})$, 
$\left[\Delta \zeta^k \ \Delta z^k \ \Delta \alpha^k \ \Delta\xi^k \right]^\top=\Delta \widehat{p^k}$, one can verify that 
$\Delta\widehat{ v^k}=\left[\Delta C^k \ \Delta \zeta^k \ \Delta z^k \ \Delta \alpha^k \ \Delta\xi^k \right]^\top$, $\widehat{p^k},\ \widehat{q^k}$ satisfy \eqref{eq_vpq_k_re1}-\eqref{eq_vpq_k_re5}. Therefore, for any $\epsilon\searrow 0$, we found $(\Delta \widehat{ v^k}, \Delta \widehat{p^k}, \Delta \widehat{q^k})$ such that \eqref{eq_limvpq} holds. The proof is finished.
\end{proof}

Lemma \ref{lem_1} and Lemma \ref{lem_2} directly lead to the following result.

\begin{theorem} \label{thm_fea}
Under Assumption \ref{as_1}, it holds that $\underset{\epsilon\searrow 0}\lim \Gamma(\epsilon)=\Gamma^0$.	
\end{theorem}

This theorem demonstrates that under certain assumption, the feasible set of \eqref{pb_nlp} is equal to that of \eqref{pb_mpec}, as $\epsilon\searrow 0$.

\section{Solving Subproblem \eqref{pb_nlp}}
In this section, we consider the first and second order optimality conditions, and design a numerical algorithm to solve subproblem \eqref{pb_nlp} and study its convergence properties. 
\eqref{pb_nlp} is an optimization problem with nonlinear equality constraints, one way is to choose optimization packages, such as SNOPT or Matlab built-in function \textit{fmincon}. Keeping in mind that our aim is to design an efficient package-independent algorithm, together with the equality constraints in \eqref{pb_nlp}, so we decide to solve the KKT system \eqref{pb_kkt}, which is essentially a set of nonlinear equations.


\subsection{The linearly independence constraint qualification}

In this part, we will show that for smoothing problem \eqref{pb_nlp}, the linear independent constraint qualification (LICQ) holds at each feasible point $v$, which guarantees the uniqueness of the Lagrange multiplier. 

For simplicity, denote $W^G := \Diag(w^G_1,\dots, w^G_m),\ W^H := \Diag(w^H_1,\dots, w^H_m)$, where $w^G_i:=1-\frac{G_i( v )}{\sqrt{G^2_i( v )+H_i^2( v )+\epsilon^2}},\ w^H_i:=1-\frac{H_i( v )}{\sqrt{G^2_i( v )+H_i^2( v )+\epsilon^2}}, \ i\in[m].$
By applying the chain rule to calculate the Jacobi matrix of $\Phi_\epsilon(G( v ),H( v ))$, we obtain the following proposition.
\begin{proposition}\label{prop-JPhi}
	$J_ v \Phi_\epsilon(G( v ),H( v ))$ takes the following form
	\begin{equation*}
		J_ v \Phi_\epsilon(G( v ),H( v ))=W^GL^G+W^HL^H\in \mathbb{R}^{m\times(m+1)},
	\end{equation*}
    where
\begin{equation}\label{def_LG_LH_bH}\small
    L^G=\left[\begin{array}{ccccc}
	\mathbf{0}&\mathcal I&\mathbf{0}&\mathbf{0}&\mathbf{0}\\
	\mathbf{0}&\mathbf{0}&\mathcal I&\mathbf{0}&\mathbf{0}\\
	\mathbf{0}&\mathbf{0}&\mathbf{0}&\mathcal I&\mathbf{0}\\
	\mathbf{0}&\mathbf{0}&\mathbf{0}&\mathbf{0}&\mathcal I\\
\end{array}\right], \ L^H=\left[\begin{array}{ccccc}
	\mathbf{0}&\mathbf{0}&\mathcal I&AB^\top&\mathbf{0}\\
	\mathbf{0}&-\mathcal I&\mathbf{0}&\mathbf{0}&\mathbf{0}\\
	\mathbf{0}&\mathbf{0}&\mathbf{0}&BB^\top&\mathcal I\\
	\mathbf{1}&\mathbf{0}&\mathbf{0}&-\mathcal I&\mathbf{0}\\
\end{array}\right],\ b^H = \left[\begin{array}{c}
	\mathbf{0}\\
	\mathbf{1}\\
	\mathbf{-1}\\
	\mathbf{0}\\
\end{array}\right].
\end{equation}
\end{proposition}

\begin{proof}
	It is easy to verify that $J_ v  G( v ) = L^G, \ J_ v  H( v ) = L^H $. Also note that $\partial_a	\phi_\epsilon(a,b)=1-\frac{a}{\sqrt{a^2+b^2+\epsilon^2}}$. The result easily follows by the chain rule.
\end{proof}

Recall that in Theorem \ref{thm_C}, one of the assumptions is LICQ holds at $v^{t}$ for each $t$. Below we will show that LICQ property holds for each feasible point of \eqref{pb_nlp}. 

\begin{theorem}\label{thm_licq}
	LICQ holds at each feasible point $ v $ of \eqref{pb_nlp}.
\end{theorem}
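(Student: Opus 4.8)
The plan is to translate LICQ into a rank condition and then resolve it through the block sparsity of $L^G$ and $L^H$. Because \eqref{pb_nlpec} carries only the equality constraints $\Phi_\epsilon(G(v),H(v))=0$, all $m$ of them are active at every feasible point, so LICQ is equivalent to $J_v\Phi_\epsilon(G(v),H(v))$ having full row rank $m$. Proposition \ref{prop-JPhi} identifies this Jacobian with $W^GL^G+W^HL^H$, and the task reduces to showing that this $m\times(m+1)$ matrix has trivial left null space.

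The decisive preliminary observation is that $W^G$ and $W^H$ are invertible whenever $\epsilon>0$. Since $\sqrt{G_i^2(v)+H_i^2(v)+\epsilon^2}$ strictly exceeds both $|G_i(v)|$ and $|H_i(v)|$, each diagonal entry $w_i^G$ and $w_i^H$ is strictly positive; this is precisely the point at which the positive smoothing parameter is indispensable, as these entries may vanish when $\epsilon=0$. I would then partition $W^G$ and $W^H$ conformally with the four row blocks of \eqref{def_LG_LH_bH} into invertible positive-diagonal blocks $W^G_1,\dots,W^G_4$ and $W^H_1,\dots,W^H_4$.

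Writing out $W^GL^G+W^HL^H$ column-block by column-block against $(C,\zeta,z,\alpha,\xi)$ using \eqref{def_LG_LH_bH}, I would test a left null vector $y=(y_1,y_2,y_3,y_4)$ by setting each column block of $y^\top(W^GL^G+W^HL^H)$ to zero. The $\zeta$- and $z$-columns give $W^G_1y_1=W^H_2y_2$ and $W^H_1y_1=-W^G_2y_2$; eliminating $y_1$ leaves $\bigl[(W^G_1)^{-1}W^H_2+(W^H_1)^{-1}W^G_2\bigr]y_2=0$, a diagonal system with strictly positive entries, whence $y_2=0$ and $y_1=0$. It remains to handle $y_3,y_4$ from the $\xi$- and $\alpha$-columns, which (after $y_1=0$) read $W^H_3y_3+W^G_4y_4=0$ and $(W^G_3+BB^\top W^H_3)y_3=W^H_4y_4$. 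Substituting $y_4=-(W^G_4)^{-1}W^H_3y_3$ yields $\bigl[W^G_3+\bigl(BB^\top+W^H_4(W^G_4)^{-1}\bigr)W^H_3\bigr]y_3=0$.

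The main obstacle is showing that this last bracketed matrix is nonsingular, since it is the only place where the data coupling survives. Here I would right-multiply by $(W^H_3)^{-1}$ to obtain $W^G_3(W^H_3)^{-1}+BB^\top+W^H_4(W^G_4)^{-1}$, a sum of two strictly positive diagonal matrices and the positive semidefinite matrix $BB^\top$, hence positive definite and therefore nonsingular. This forces $y_3=0$ and then $y_4=0$, so $y=0$ and the rows are independent. It is worth noting that the other coupling block $AB^\top$ never enters the decisive equations because $y_1=0$ eliminates it, and the redundant $C$-column is not even needed; the argument rests solely on the positivity of the smoothing weights together with $BB^\top\succeq0$.
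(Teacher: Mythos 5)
Your proof is correct and follows essentially the same route as the paper: both reduce LICQ to full row rank of $J_v\Phi_\epsilon(G(v),H(v))=W^GL^G+W^HL^H$, test a left null vector against the block columns of \eqref{eq_Jphi}, eliminate $(y_1,y_2)$ via strictly positive diagonal combinations, and then eliminate $(y_3,y_4)$ through the block involving $BB^\top$. Your one refinement, right-multiplying by $(W^H_3)^{-1}$ so that nonsingularity follows from the symmetric positive definite matrix $W^G_3(W^H_3)^{-1}+BB^\top+W^H_4(W^G_4)^{-1}$, is in fact tighter than the paper's corresponding step, which asserts positive definiteness of the nonsymmetric matrix $W^G_3+W^H_3BB^\top+W^H_3(W^G_4)^{-1}W^H_4$ directly; your factorization is the cleaner way to justify that conclusion.
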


\begin{proof}
We only need to show that, the Jacobian of $\Phi_\epsilon(H(v),G(v))$ with respect to $v$ (denoted as $J_v\Phi_\epsilon(H(v),G(v))$) has full row rank.
	
Let the diagonal matrix have the same partitions as $L^G$ and $L^H$. That is,
\begin{equation*} 
    W^G={\rm Diag}\left(W^G_1, W^G_2, W^G_3, W^G_4\right),\ 
    W^H={\rm Diag}\left(W^H_1, W^H_2, W^H_3, W^H_4\right),
\end{equation*}
where
$W^G_1,\ W^G_2,\ W^H_1,\ W^H_2 \in\mathbb{R}^{Tm_1\times Tm_1}, 
\ W^G_3,\ W^G_4,\ W^H_3,\ W^H_4 \in\mathbb{R}^{Tm_2\times Tm_2}.$
By Proposition \ref{prop-JPhi}, $J_ v \Phi_\epsilon(G( v ),H( v ))$ takes the following form
\begin{equation} \label{eq_Jphi} 
J_ v \Phi_\epsilon(G( v ),H( v ))=\left[\begin{array}{ccccc}
    {0}&W^G_1&W^H_1&W^H_1AB^\top&\mathbf{0}\\
    {0}&-W^H_2&W^G_2&\mathbf{0}&\mathbf{0}\\
    {0}&\mathbf{0}&\mathbf{0}&W^G_3+W^H_3BB^\top&W^H_3\\
    W^H_4\mathbf{1}&\mathbf{0}&\mathbf{0}&-W^H_4&W^G_4\\\end{array}\right].
\end{equation}
Assume that there exists nonzero vector $\rho$ with partition 
$\rho=\left[\rho_1^\top\ \rho_2^\top\ \rho_3^\top\ \rho_4^\top\right]^\top\in \mathbb{R}^m$
such that $\rho^\top J_ v \Phi_\epsilon(G( v ),H( v ))=0$. 
It gives the following results:

\begin{subnumcases}{}
\rho_4^\top W^H_4\mathbf{1}=0,\label{eq_Jphi_sys_1}\\
\rho_1^\top W^G_1-\rho_2^\top W^H_2=0,\label{eq_Jphi_sys_2}\\
\rho_1^\top W^H_1+\rho_2^\top W^G_2=0,\label{eq_Jphi_sys_3}\\
\rho_1^\top W^H_1AB^\top+\rho_3^\top( W^G_3+W^H_3BB^\top)-\rho_4^\top W^H_4=0,\label{eq_Jphi_sys_4}\\
\rho_3^\top W^H_3+\rho_4^\top W^G_4=0.\label{eq_Jphi_sys_5}	    
\end{subnumcases}
It is easy to see that $w_i^G\in(0,1)$, $w_i^H\in(0,1)$, $i\in[m]$. Therefore, by \eqref{eq_Jphi_sys_2}, $\rho_1=(W^G_1)^{-1}W^H_2\rho_2$, where $(W^G_1)^{-1}W^H_2$ is a diagonal matrix with positive diagonal elements. Substituting it into \eqref{eq_Jphi_sys_3}, we obtain that
\begin{equation*}\rho^\top_2\left(W^H_2(W^G_1)^{-1}W^H_1+W^G_2\right)=0,\end{equation*}
where $W^H_2(W^G_1)^{-1}W^H_2+W^G_2$ is also a diagonal matrix with positive diagonal elements. It implies that $\rho_2=0$ and hence $\rho_1=0$. Therefore, \eqref{eq_Jphi_sys_4} reduces to the following equations
\begin{equation*}\rho_3^\top( W^G_3+W^H_3BB^\top)-\rho_4^\top W^H_4=0.\end{equation*}
By \eqref{eq_Jphi_sys_5}, we obtain that $\rho_4 = -(W^G_4)^{-1}W^H_3\rho_3$, where $(W^G_4)^{-1}W^H_3$ is a diagonal matrix with positive diagonal elements. Substituting it into the above equations, we obtain that 
\begin{equation*}\rho_3^\top\left(W_3^G+W^H_3BB^\top +W_3^H(W^G_4)^{-1}W^H_4\right)=0.\end{equation*}
Note that $W_3^G+W^H_3BB^\top +W_3^H(W^G_4)^{-1}W^H_4$ is a positive definite matrix. It gives that $\rho_3=0$. Hence $\rho_4=0$. 	
Overall, we obtain that $\rho=0$, implying that the row vectors in $J_ v \Phi_\epsilon(G( v ),H( v ))$ are linearly independent. Hence $J_ v \Phi_\epsilon(G( v ),H( v ))$ is of full row rank. 
Therefore, LICQ holds at each feasible point of \eqref{pb_nlp}.	
\end{proof}

The Lagrange function of \eqref{pb_nlp} is given by
\begin{equation*}L_\epsilon( v ,\lambda)=f( v )-\lambda^\top \Phi_\epsilon(H( v ), G( v )),\end{equation*}
where $\lambda\in \mathbb{R}^m$ is the Lagrange multiplier corresponding to the equality constraints in \eqref{pb_nlp}.
If $\bar  v $ is a local minimizer of \eqref{pb_nlp}, by LICQ at $\bar  v $ in Theorem \ref{thm_licq}, there exists a unique Lagrange multiplier $\bar\lambda$ such that the following KKT condition holds at $(\bar  v ,\bar{\lambda})$, i.e.,
\begin{equation}\label{pb_kkt}
	F_\epsilon(\bar v ,\bar{\lambda})=0,\ {\rm where}\ 
	F_\epsilon(\bar v ,\bar{\lambda}):= 
	\left[\begin{array}{c}
		\nabla_ {\bar v}  L_\epsilon(\bar v,\bar{\lambda})\\
		-\Phi_\epsilon(G(\bar v),H(\bar v))\end{array}\right]\in\mathbb{R}^{2m+1}.
\end{equation} 

\subsection {Second order sufficient condition}
We focus on the second-order sufficient condition of \eqref{pb_nlp} at $(\bar  v ,\bar{\lambda})$ in this subsection. First, we consider the critical cone $\mathcal C(\bar v ,\bar\lambda)$ at the solution $(\bar v ,\bar\lambda)$ of the KKT system.
By the definition of critical cone \cite[chapter 12, page 330]{nocedal2006numerial}, it holds that 
\begin{equation*}
\mathcal C(\bar v ,\bar\lambda)=\{d \in \mathbb{R}^{m+1} \mid d = (d^C;d^\zeta;d^z;d^\alpha;d^\xi)\mid J_ {\bar v} \Phi_\epsilon(G( \bar v ),H( \bar v )) d=0\}.
\end{equation*}
We have the following result of $d$ in detail.

\begin{proposition}\label{prop-critical}
For any $d\in\mathcal C(\bar{ v },\bar{\lambda})\backslash \{0\}$, $d =Ud^C$ where
\begin{equation} \label{eq_u} \footnotesize
    U=\left[\begin{array}{c}
		1\\
		U^\zeta\\
		U^z\\
		U^\alpha\\
		U^\xi
	\end{array}\right], 
{\rm \ and\ }
\begin{cases}
U^\zeta=-(W^H_2)^{-1}W^G_2\left(W^G_1(W^H_2)^{-1}W^G_2+W^H_1\right)^{-1}W^H_1AB^\top U^\alpha\\
U^z= -\left(W^G_1(W^H_2)^{-1}W^G_2+W^H_1\right)^{-1}W^H_1AB^\top U^\alpha,\\
U^\alpha=\left(W^H_4+W^G_4\left((W^H_3)^{-1}W^G_3+BB^\top\right)\right)^{-1}W^H_4 \mathbf{1},\\
U^\xi=-(W^H_3)^{-1}(W^G_3+W^H_3BB^\top)U^\alpha.
\end{cases}\end{equation}
\end{proposition}

\begin{proof}
By Proposition \ref{prop-JPhi} and \eqref{eq_Jphi}, it holds that 
\begin{equation*}\small
J_ {\bar v} \Phi_\epsilon(G( \bar v ),H( \bar v ))d=
\left[\begin{array}{ccccc}
    {0}&W^G_1&W^H_1&W^H_1AB^\top&\mathbf{0}\\
    {0}&-W^H_2&W^G_2&\mathbf{0}&\mathbf{0}\\
    {0}&\mathbf{0}&\mathbf{0}&W^G_3+W^H_3BB^\top&W^H_3\\
    W^H_4\mathbf{1}&\mathbf{0}&\mathbf{0}&-W^H_4&W^G_4\\
\end{array}\right]
\left[\begin{array}{c}
    d^C\\
    d^\zeta\\
    d^z\\
    d^\alpha\\
    d^\xi
\end{array}\right]=0.
\end{equation*}
Solving the system and noting that $W^G_i,\ W^H_i, \ i\in[4]$ are all diagonal matricies with diagonal elments in $(0,1)$, we then obtain that
\begin{equation*}\begin{cases}
d^\zeta=-(W^H_2)^{-1}W^G_2\left(W^G_1(W^H_2)^{-1}W^G_2+W^H_1\right)^{-1}W^H_1AB^\top d^\alpha, \\
d^z=-\left(W^G_1(W^H_2)^{-1}W^G_2+W^H_1\right)^{-1}W^H_1AB^\top d^\alpha,\\
d^\alpha= \left(W^H_4+W^G_4\left((W^H_3)^{-1}W^G_3+BB^\top\right)\right)^{-1}W^H_4 \mathbf{1} d^C,\\
d^\xi= -(W^H_3)^{-1}(W^G_3+W^H_3BB^\top)d^\alpha .
\end{cases}\end{equation*}
By the definition of $U^\zeta, \ U^z,\ U^\alpha$ and $U^\xi$ in \eqref{eq_u}, we obtain the result.
\end{proof}

Next, we have the following characterization of $\nabla_{vv}^2 L_\epsilon(v,\lambda)$, which is an important part of the second order sufficient condition.

\begin{proposition}\label{prop-hessian}
$\nabla^2_{ v  v }L_\epsilon( v ,\lambda)$ takes the following form
\begin{equation} \footnotesize\label{eq-hessian}
\nabla^2_{ v  v }L_\epsilon( v ,\lambda)=
(L^G)^\top M^GL^G+(L^H)^\top M^HL^H+(L^G)^\top M^{GH}L^H+(L^H)^\top M^{GH}L^G,	
\end{equation}
where
\begin{equation*}\small
	\begin{aligned}
		M^G\ &={\rm Diag}\left( 
		\frac{\lambda_1(H_1( v )^2+\epsilon^2)}{(\sqrt{G_i( v )^2+H_i( v )^2+\epsilon^2})^3},\cdots,
		\frac{\lambda_m(H_m( v )^2+\epsilon^2)}{(\sqrt{G_m( v )^2+H_m( v )^2+\epsilon^2})^3}
		\right),\\
		M^H\ &={\rm Diag}\left( 
		\frac{\lambda_1(G_1( v )^2+\epsilon^2)}{(\sqrt{G_i( v )^2+H_i( v )^2+\epsilon^2})^3},\cdots,
		\frac{\lambda_m(G_m( v )^2+\epsilon^2)}{(\sqrt{G_m( v )^2+H_m( v )^2+\epsilon^2})^3}
		\right),\\
		M^{GH}&={\rm -Diag}\left( 
		\frac{\lambda_1H_1( v )G_1( v )}{(\sqrt{G_i( v )^2+H_i( v )^2+\epsilon^2})^3},\cdots,
		\frac{\lambda_1H_1( v )G_1( v )}{(\sqrt{G_m( v )^2+H_m( v )^2+\epsilon^2})^3}
		\right).\\
	\end{aligned}
\end{equation*}

\end{proposition}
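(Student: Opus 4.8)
The plan is to reduce the computation to the $2\times 2$ Hessian of the scalar smoothing function $\phi_\epsilon$ and then reassemble the resulting sum over the $m$ complementarity pairs into the three matrix products appearing in \eqref{eq-hessian}. First I would note that the objective $f(v)=\frac{1}{Tm_1}\mathbf{1}^\top\zeta$ is affine in $v$, so $\nabla^2_{vv}f\equiv 0$ and the whole Hessian comes from the constraint term:
\[
\nabla^2_{vv}L_\epsilon(v,\lambda)=-\sum_{i=1}^m\lambda_i\,\nabla^2_{vv}\phi_\epsilon\big(G_i(v),H_i(v)\big).
\]
Because $\phi_\epsilon(a,b)$ is symmetric in its two arguments, the $(G,H)$ versus $(H,G)$ ordering used at various places above is immaterial here.

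Next I would differentiate $\phi_\epsilon(a,b)=a+b-\sqrt{a^2+b^2+\epsilon^2}$ one more time. Starting from $\partial_a\phi_\epsilon=1-a/r$ with $r=\sqrt{a^2+b^2+\epsilon^2}$ (already recorded in Proposition \ref{prop-JPhi}), a direct calculation gives
\[
\partial_{aa}\phi_\epsilon=-\frac{b^2+\epsilon^2}{r^{3}},\qquad
\partial_{bb}\phi_\epsilon=-\frac{a^2+\epsilon^2}{r^{3}},\qquad
\partial_{ab}\phi_\epsilon=\frac{ab}{r^{3}}.
\]
Evaluating at $(a,b)=(G_i(v),H_i(v))$, with $r_i=\sqrt{G_i(v)^2+H_i(v)^2+\epsilon^2}$, yields the per-index second-order coefficients.

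Then I would apply the chain rule for second derivatives. The key structural simplification is that $G(v)=L^Gv$ and $H(v)=L^Hv+b^H$ are affine, so their Jacobians $J_vG=L^G$ and $J_vH=L^H$ are constant and the second-order terms $\nabla^2 G_i$ and $\nabla^2 H_i$ all vanish; only the outer $2\times2$ Hessian survives. Writing $g_i^\top$ and $h_i^\top$ for the $i$-th rows of $L^G$ and $L^H$, this gives
\[
\nabla^2_{vv}\phi_\epsilon(G_i,H_i)
=\partial_{aa}\phi_\epsilon\,g_ig_i^\top+\partial_{bb}\phi_\epsilon\,h_ih_i^\top
+\partial_{ab}\phi_\epsilon\big(g_ih_i^\top+h_ig_i^\top\big).
\]
Multiplying by $-\lambda_i$ and summing over $i$, the coefficient of $g_ig_i^\top$ is $-\lambda_i\partial_{aa}\phi_\epsilon=\lambda_i(H_i^2+\epsilon^2)/r_i^{3}$, i.e. the $i$-th diagonal entry of $M^G$; likewise the coefficients of $h_ih_i^\top$ and of $g_ih_i^\top+h_ig_i^\top$ reproduce $M^H$ and $M^{GH}$ respectively. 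Finally, collecting the rank-one pieces via $\sum_i(M^G)_{ii}g_ig_i^\top=(L^G)^\top M^GL^G$, and analogously for the other two sums, delivers exactly \eqref{eq-hessian}.

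The computation is essentially routine; the only points demanding care are the sign bookkeeping — the minus sign from the Lagrangian combining with the minus sign built into $M^{GH}$ — and recognizing that the cross term must be written in the symmetric form $(L^G)^\top M^{GH}L^H+(L^H)^\top M^{GH}L^G$ so that the Hessian is manifestly symmetric. I do not anticipate a genuine obstacle beyond keeping the row/column conventions of $L^G,L^H$ consistent throughout.
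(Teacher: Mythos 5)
Your proposal is correct and follows essentially the same route as the paper's proof: both reduce $\nabla^2_{vv}L_\epsilon$ to $-\sum_i\lambda_i\nabla^2_{vv}\phi_\epsilon(G_i(v),H_i(v))$ using linearity of $f$, compute the same second partials of $\phi_\epsilon$, exploit the affine structure of $G$ and $H$ so only rank-one outer products of the rows of $L^G$ and $L^H$ survive, and reassemble these into the three matrix products of \eqref{eq-hessian}. The paper phrases the chain-rule step as differentiating the weights $W^G_i,W^H_i$ from Proposition \ref{prop-JPhi}, but that is the same computation as your direct second-order chain rule for affine compositions.
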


\begin{proof}
By the definition of Lagrange function, it holds that
\begin{equation*}
    \nabla_ v  L_\epsilon ( v ,\lambda)=\nabla_ v  f( v )- \sum_{i=1}^m\lambda_i\nabla_ v \phi_\epsilon(G_i( v ),H_i( v )).
\end{equation*}
Together with the fact that $f( v )$ is a linear function, we obtain that
\begin{equation}\label{eq-def-hessian-2}
    \nabla^2_{ v  v } L_\epsilon( v ,\lambda) = -\sum_{i=1}^m\lambda_i\nabla^2_{ v  v }\phi_\epsilon(G_i( v ),H_i( v )). \\
\end{equation}
According to the definition of $\phi_\epsilon(a,b)$, it holds that
\begin{equation*}\frac{\partial^2\phi_\epsilon(a,b)}{\partial a\partial b}=\frac{ab}{(a^2+b^2+\epsilon^2)^{\frac32}},\ \frac{\partial^2\phi_\epsilon(a,b)}{\partial a^2}=\frac{-(b^2+\epsilon^2)}{(a^2+b^2+\epsilon^2)^{\frac32}}.\end{equation*}
Therefore, by Proposition \ref{prop-JPhi}, $\nabla^2_{ v  v }\phi_\epsilon(G_i( v ),H_i( v ))$ in \eqref{eq-def-hessian-2} is given by ($i\in[m]$)
\begin{equation}\label{eq-hessian-1}\scriptsize
\nabla^2_{ v  v }\phi_\epsilon(G_i( v ),H_i( v ))
=\frac{\partial}{\partial  v }\left(W^G_iL^G(i,:)+W^H_iL^H(i,:)\right)
=L^G(i,:)^\top\frac{\partial W^G_i}{\partial  v } + L^H(i,:)^\top\frac{\partial W^H_i}{\partial  v }.
\end{equation}
where
\begin{eqnarray*}\frac{\partial W^G_i}{\partial  v }
&=& \frac{\partial W^G_i}{\partial G_i(v)}\frac{\partial G_i(v)}{\partial  v }+\frac{\partial W^G_i}{\partial H_i(v)}\frac{\partial H_i(v)}{\partial  v }
=\frac{\partial W^G_i}{\partial G_i(v)}L^G(i,:) + \frac{\partial W^G_i}{\partial H_i(v)}L^H(i,:) \\
&=&  -\frac{ H_i( v )^2+\epsilon^2}{(G_i( v )^2+H_i( v )^2+\epsilon^2)^\frac32}L^G(i,:) + \frac{ G_i( v )H_i( v )}{(G_i( v )^2+H_i( v )^2+\epsilon^2)^\frac32}L^H(i,:),
\end{eqnarray*}
and similarly,
\begin{eqnarray*}\frac{\partial W^H_i}{\partial  v }&=&  -\frac{ G_i( v )^2+\epsilon^2}{(G_i( v )^2+H_i( v )^2+\epsilon^2)^\frac32}L^H(i,:) + \frac{ G_i( v )H_i( v )}{(G_i( v )^2+H_i( v )^2+\epsilon^2)^\frac32}L^G(i,:).
\end{eqnarray*}
Therefore, \eqref{eq-hessian-1} can be transformed as follows:
\begin{equation*}
\begin{aligned}
&\nabla^2_{ v  v }\phi_\epsilon(G_i( v ),H_i( v ))\\
&={\mathsmaller{ \mathsmaller{-\dfrac{ H_i( v )^2+\epsilon^2}{(G_i( v )^2+H_i( v )^2+\epsilon^2)^\frac32} (L^G(i,:))^\top L^G(i,:) + \dfrac{ G_i( v )H_i( v )}{(G_i( v )^2+H_i( v )^2+\epsilon^2)^\frac32}(L^G(i,:))^\top L^H(i,:)}}}\\
&{\mathsmaller{ \mathsmaller{-\dfrac{ G_i( v )^2+\epsilon^2}{(G_i( v )^2+H_i( v )^2+\epsilon^2)^\frac32} (L^H(i,:))^\top L^H(i,:)+ \dfrac{ G_i( v )H_i( v )}{(G_i( v )^2+H_i( v )^2+\epsilon^2)^\frac32} (L^H(i,:))^\top L^G(i,:).}}}
\end{aligned}
\end{equation*}
Substituting it into \eqref{eq-def-hessian-2}, we get the result in \eqref{eq-hessian}. 
\end{proof}

Before giving the final theorem of second order sufficient condition, we need an assumption below.

\begin{assumption}\label{ass_2}
	Let $U\in\mathbb{R}^m$ be defined as in Proposition \ref{prop-critical}. Assume that for $(\bar{ v }, \bar{\lambda})$ satisfying the KKT system $F_\epsilon( v ,\lambda)=0$, the following holds
	\begin{equation*}(U^G)^\top M^G U^G + (U^H)^\top M^H U^H + 2 (U^G)^\top M^{GH} U^H > 0,\end{equation*}where 
	\begin{equation*}U^G = \left[\begin{array}{c}U^\zeta\\ U^z\\ U^\alpha\\ U^\xi \end{array}\right],\ U^H = \left[\begin{array}{c} U^z+AB^\top U^\alpha \\ -U^\zeta\\ BB^\top U^\alpha+U^\xi \\ \mathbf{1}-U^\alpha \end{array}\right].\end{equation*}
\end{assumption}

Under this assumption, we give the second order sufficient condition of \eqref{pb_nlp}.

\begin{theorem}
	\label{thm-ssosc}
	For $(\bar{ v }, \bar{\lambda})$ satisfying the KKT system $F_\epsilon( v ,\lambda)=0$, let Assumption \ref{ass_2} hold. The second order sufficient condition holds at $(\bar{ v }, \bar{\lambda})$. That is, 
	for any $d\in\mathcal C(\bar{ v }, \bar{\lambda})\backslash\{0\}$,
	$d^\top \nabla^2_{\bar{ v } \bar{ v } }L_\epsilon(\bar{ v } ,\bar{\lambda}) d>0$.
    Therefore, $\bar{ v }$ is a strict local minimizer of \eqref{pb_nlp}.
\end{theorem}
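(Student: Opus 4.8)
The plan is to reduce the quadratic form $d^\top \nabla^2_{vv}L_\epsilon(v,\lambda)d$ over the critical cone to exactly the expression appearing in Assumption~\ref{ass_2}, so that positivity follows immediately. The two main ingredients are already assembled: Proposition~\ref{prop-critical} gives the explicit parametrization $d = U d^C$ of every nonzero critical direction, and Proposition~\ref{prop-hessian} gives the closed form of the Hessian in terms of $L^G$, $L^H$ and the diagonal matrices $M^G$, $M^H$, $M^{GH}$.

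First I would take an arbitrary $d\in\mathcal C(\bar v,\bar\lambda)\setminus\{0\}$ and invoke Proposition~\ref{prop-critical} to write $d = U d^C$, noting that $d^C\neq 0$ (otherwise $d=0$, since the first component of $U$ is $1$). Substituting the Hessian formula \eqref{eq-hessian} from Proposition~\ref{prop-hessian} and using the grouping $(L^G)^\top M^G L^G + (L^H)^\top M^H L^H + (L^G)^\top M^{GH}L^H + (L^H)^\top M^{GH}L^G$, the quadratic form becomes
\[
d^\top \nabla^2_{vv}L_\epsilon(v,\lambda) d
= (d^C)^2\Bigl[(L^G U)^\top M^G (L^G U) + (L^H U)^\top M^H (L^H U) + 2\,(L^G U)^\top M^{GH}(L^H U)\Bigr],
\]
where I have used the symmetry of $M^{GH}$ to combine the two cross terms. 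The key step is then to identify $L^G U$ and $L^H U$ with the vectors $U^G$ and $U^H$ defined in Assumption~\ref{ass_2}: applying the explicit blocks of $L^G$ and $L^H$ from \eqref{def_LG_LH_bH} to $U = (1;U^\zeta;U^z;U^\alpha;U^\xi)$ yields $L^G U = (U^\zeta; U^z; U^\alpha; U^\xi) = U^G$ and $L^H U = (U^z + AB^\top U^\alpha;\, -U^\zeta;\, BB^\top U^\alpha + U^\xi;\, \mathbf 1 - U^\alpha) = U^H$, exactly matching the definitions in the assumption.

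With these identifications the bracketed expression is precisely $(U^G)^\top M^G U^G + (U^H)^\top M^H U^H + 2(U^G)^\top M^{GH}U^H$, which is strictly positive by Assumption~\ref{ass_2}. Since $(d^C)^2 > 0$, the whole quantity is strictly positive, establishing the second order sufficient condition. The only nonroutine step is the block multiplication verifying $L^G U = U^G$ and $L^H U = U^H$; everything else is substitution and the symmetry of $M^{GH}$. The main obstacle—though really a bookkeeping matter—is making sure the block structure of $L^G, L^H$ is applied consistently with the partition of $U$ into its $C$, $\zeta$, $z$, $\alpha$, $\xi$ components, so that the $AB^\top$, $BB^\top$ and $\mathbf 1$ terms land in the correct rows of $U^H$.
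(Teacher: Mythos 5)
Your proposal is correct and follows essentially the same route as the paper's proof: parametrize critical directions as $d = U d^C$ via Proposition \ref{prop-critical}, substitute the Hessian expression \eqref{eq-hessian} from Proposition \ref{prop-hessian}, identify $L^G U = U^G$ and $L^H U = U^H$ using \eqref{def_LG_LH_bH}, and conclude by Assumption \ref{ass_2}. Your explicit remark that $d^C \neq 0$ (since the first component of $U$ is $1$) is a small but welcome addition that the paper leaves implicit.
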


\begin{proof}
By Proposition \ref{prop-critical} and Proposition \ref{prop-hessian}, for any 
$d\in\mathcal C(\bar{ v }, \bar{\lambda})\backslash\{0\}$, it holds that (recall \eqref{def_LG_LH_bH})
\begin{equation*}
\begin{aligned}
&d^\top \nabla^2_{\bar v\bar v}L_\epsilon(\bar v,\bar\lambda)d\\
&= (d^C)^2 U^\top \nabla^2_{\bar v\bar v} L_\epsilon(\bar v,\bar\lambda) U \\
&={\mathsmaller{ (d^C)^2 U^\top \big( (L^G)^\top M^G L^G+(L^H)^\top M^H L^H
+ (L^G)^\top M^{GH}L^H+(L^H)^\top M^{GH}L^G \big) U}}\\
&={\mathsmaller{ \mathsmaller{ (d^C)^2 \big( (L^G U)^\top M^G (L^G U) + (L^H U)^\top M^H (L^H U)
+ (L^G U)^\top M^{GH}(L^H U) + (L^H U)^\top M^{GH}(L^G U) \big)}}}\\
&= (d^C)^2 \big( (U^G)^\top M^G U^G + (U^H)^\top M^H U^H + 2 (U^G)^\top M^{GH} U^H \big)\\
&>0 .
\end{aligned}
\end{equation*}


The last inequality is due to Assumption \ref{ass_2}. The proof is finished.
\end{proof}

\subsection{Damped Newton method for \eqref{pb_nlp}}

To solve \eqref{pb_nlp}, one can use software packages, such as SNOPT in \cite{Li_05,Li_06}. Noting the sparsity of $L^G, L^H$ defined by \eqref{def_LG_LH_bH}, combined with the LICQ and second-order properties discussed in Theorem \ref{thm_licq} and Theorem \ref{thm-ssosc}, in our paper we apply damped Newton method proposed in \cite{zhang2013perturbation} to solve \eqref{pb_kkt}, which can make the computing process more targeted and efficient.

Let $g_\epsilon(r)=\frac12\|F_\epsilon(r)\|_2^2$, where $r=( v ,\lambda)\in\mathbb{R}^{2m+1}$. The damped Newton method fills into the traditional line-search update: $r^{k+1}=r^k+s_kd^k$, and $d^k$ is the search direction obtained by solving a linear equation \eqref{eq_Fr} in an inexact way (using for example, BICGSTAB). The details of damped Newton method is given in Algorithm \ref{algo2}. 

\begin{algorithm}[htbp]
\caption{Damped Newton Method for \eqref{pb_nlp}}\label{algo2}
\begin{algorithmic}[1]

\State Choose initial point $r^0$ and parameter $\sigma\in(0,\frac12)$, $\rho\in(0,1)$. Set $k:=0$.

\State \textbf{while} $\|F_\epsilon(r^k)\|_2 \neq 0$, \textbf{do}

\State \quad Find a solution $d^k$ of the linear system by BICG-STAB
\begin{equation} \quad
\label{eq_Fr}
F_\epsilon(r^k)+\Gamma^kd^k=0, 
\end{equation}
\quad where $\Gamma^k=J_rF_\epsilon(r^k)$. 

\State \quad Let $s_k=\rho^{i_k}$, where $i_k$ is the smallest non-negative integer satisfying
\begin{equation*} \quad
g_\epsilon(r^k+\rho^{i_k}d^k)\le g_\epsilon(r^k)+\sigma\rho^{i_k}\nabla g_\epsilon(r^k)^\top d^k,
\end{equation*}
\quad where $g_\epsilon(r)=\frac12\|F_\epsilon(r)\|_2^2$.

\State \quad Let $r^{k+1}=r^k+s_kd^k$, and $k:=k+1$.

\State \textbf{end while}

\State \textbf{Return} the final iterate $r_{\rm opt}=r^k$.

\end{algorithmic}
\end{algorithm}

To make the linear equation \eqref{eq_Fr} in step 3 well-defined, we need to investigate the nonsingularity of $J_r F_\epsilon(\bar r)$, where $\bar r=(\bar v ,\bar\lambda)\in\mathbb{R}^{2m+1}$. Note that $J_r F_\epsilon(\bar r)$ takes the following form
\begin{equation} \label{eq-JF}
	J_r F_\epsilon(\bar r)=\left[\begin{array}{cc}
		\nabla^2_{ v  v }L_\epsilon(\bar v ,\bar\lambda)& -J_{ v }\Phi_\epsilon(G(\bar v ),H(\bar v ))^\top\\
		-J_{ v }\Phi_\epsilon(G(\bar v ),H(\bar v ))&\mathbf{0}	
		\end{array}\right].
\end{equation}

We have the following result.

\begin{theorem}\label{thm-nonsingularity}For $\bar{r}$ satisfying the KKT system $F_\epsilon(\bar r)=0$, let Assumption \ref{ass_2} holds. Then $J_r F_\epsilon(\bar r)$ is nonsingular.
\end{theorem}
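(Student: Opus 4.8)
The plan is to exploit the symmetric saddle-point (KKT-matrix) structure of $J_r F_\epsilon(\bar r)$ displayed in \eqref{eq-JF} and to reduce nonsingularity to the two structural facts already in hand: the full-row-rank property of $J_v\Phi_\epsilon(G(\bar v),H(\bar v))$ from Theorem \ref{thm_licq}, and the positive definiteness of $\nabla^2_{vv}L_\epsilon(\bar v,\bar\lambda)$ on the critical cone from Theorem \ref{thm-ssosc}. The key observation that glues these together is that the critical cone $\mathcal C(\bar v,\bar\lambda)$ is, by the very definition given in Section \ref{sec2} (more precisely in the definition preceding Proposition \ref{prop-critical}), exactly the null space of $J_v\Phi_\epsilon(G(\bar v),H(\bar v))$.

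First I would argue by contradiction, assuming that $(u,w)\in\mathbb{R}^{m+1}\times\mathbb{R}^m$ lies in the kernel of $J_r F_\epsilon(\bar r)$. Reading off the block system from \eqref{eq-JF} produces the two relations
\[\nabla^2_{vv}L_\epsilon(\bar v,\bar\lambda)\,u = J_v\Phi_\epsilon(G(\bar v),H(\bar v))^\top w,\qquad J_v\Phi_\epsilon(G(\bar v),H(\bar v))\,u = 0.\]
The second relation places $u$ in $\ker J_v\Phi_\epsilon(G(\bar v),H(\bar v))=\mathcal C(\bar v,\bar\lambda)$.

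Next I would left-multiply the first relation by $u^\top$. Because $J_v\Phi_\epsilon(G(\bar v),H(\bar v))\,u=0$, the right-hand side collapses, since $u^\top J_v\Phi_\epsilon(G(\bar v),H(\bar v))^\top w = (J_v\Phi_\epsilon(G(\bar v),H(\bar v))\,u)^\top w = 0$, leaving $u^\top\nabla^2_{vv}L_\epsilon(\bar v,\bar\lambda)\,u=0$. If $u\neq 0$, then $u\in\mathcal C(\bar v,\bar\lambda)\setminus\{0\}$, and Theorem \ref{thm-ssosc} forces $u^\top\nabla^2_{vv}L_\epsilon(\bar v,\bar\lambda)\,u>0$, a contradiction. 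Hence $u=0$, whereupon the first relation reduces to $J_v\Phi_\epsilon(G(\bar v),H(\bar v))^\top w=0$. Since $J_v\Phi_\epsilon(G(\bar v),H(\bar v))$ has full row rank $m$ by Theorem \ref{thm_licq}, its transpose has trivial kernel, so $w=0$. Thus the kernel of $J_r F_\epsilon(\bar r)$ is trivial and the matrix is nonsingular.

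I do not expect a genuinely hard step here; the result is the textbook consequence that a symmetric KKT matrix is nonsingular once the constraint Jacobian has full row rank (LICQ) and the Hessian of the Lagrangian is positive definite on its null space (the second order sufficient condition). The only points that warrant care are the explicit identification $\ker J_v\Phi_\epsilon(G(\bar v),H(\bar v))=\mathcal C(\bar v,\bar\lambda)$, which must be read off the definition of the critical cone, and the bookkeeping ensuring that the lower-right block is genuinely the zero matrix so that no stray term survives the quadratic-form computation.
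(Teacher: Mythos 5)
Your proposal is correct and follows essentially the same route as the paper's own proof: both read off the two block equations from \eqref{eq-JF}, use the quadratic-form argument together with the second order sufficient condition to force the primal component of the kernel vector to vanish, and then invoke the full row rank of $J_v\Phi_\epsilon(G(\bar v),H(\bar v))$ from Theorem \ref{thm_licq} to conclude the multiplier component is zero. The only cosmetic difference is that you cite Theorem \ref{thm-ssosc} where the paper invokes Assumption \ref{ass_2} directly, which is equivalent under the stated hypothesis.
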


\begin{proof}
Assume that $d=(d^ v ;d^\lambda)$ satisfy $J_r F_\epsilon(\bar r)d=0$. By \eqref{eq-JF}, we have
\begin{subnumcases}{}
    \nabla^2_{ v  v }L_\epsilon(\bar v ,\bar\lambda)d^ v - J_{ v }\Phi_\epsilon(G(\bar v ),H(\bar v ))^\top d^\lambda=0,\label{eq-equality-1}\\
    J_{ v }\Phi_\epsilon(G(\bar v ),H(\bar v ))d^ v =0.\label{eq-equality-2}
\end{subnumcases}
Multiplying $(d^ v )^\top$ from the left in \eqref{eq-equality-1}, we obtain that 
\begin{equation*}(d^ v )^\top\nabla^2_{ v  v }L_\epsilon(\bar v ,\bar\lambda)d^ v  - (d^ v )^\top J_{ v }\Phi_\epsilon(G(\bar v ),H(\bar v ))^\top d^\lambda=0.\end{equation*} 
Meanwhile, \eqref{eq-equality-2} can be simplified as 
\begin{equation} \label{eq-1}
    (d^ v )^\top\nabla^2_{ v  v } L_\epsilon(\bar v ,\bar\lambda)d^ v =0.
\end{equation}
By \eqref{eq-equality-2}, we can see that $d^ v \in \mathcal C(\bar{ v },\bar{\lambda})$. Together with Assumption \ref{ass_2}, \eqref{eq-1} implies that $d^ v =0$.
\eqref{eq-equality-1} hence reduces to $J_{ v }\Phi_\epsilon(G(\bar v ),H(\bar v ))^\top d^\lambda=0.$ By the full row rank of  $J_{ v }\Phi_\epsilon(G(\bar v ),H(\bar v ))$, $d^\lambda=0$. Therefore, $J_r F_\epsilon(\bar r)$ is nonsingular.
\end{proof}

By the nonsingularity of $J_r F_\epsilon(\bar r)$, We now state the global convergence and local quadratic convergence rate of Algorithm \ref{algo1} in the following theorem, which could be obtained directly from \textit{Theorem 5.5} in \cite{zhang2013perturbation}.

\begin{theorem}\label{thm-convergence}
Let ${r^k}$ be generated by Algorithm \ref{algo1}. Suppose Assumption \ref{ass_2} holds at the accumulation point $\bar{r}$ of ${r^k}$, then 
(a) $\nabla g_\epsilon(\bar{r})=0$.
(b) If the accumulation point $\bar{r}$ satisfies $F_\epsilon(\bar r)=0$, then ${r^k}$ converge to $\bar{r}$ Q-quadratically.
\end{theorem}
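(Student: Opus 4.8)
The plan is to verify that the smoothing map $F_\epsilon$ satisfies the structural hypotheses required by \textit{Theorem 5.5} in \cite{zhang2013perturbation} and then invoke that theorem directly, since Algorithm \ref{algo1} is precisely an instance of the damped Newton scheme analyzed there, applied to the nonlinear system $F_\epsilon(r)=0$ with merit function $g_\epsilon(r)=\frac12\|F_\epsilon(r)\|_2^2$.

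First I would establish the regularity of $F_\epsilon$. Since $\epsilon>0$ is fixed, the term $\sqrt{a^2+b^2+\epsilon^2}$ appearing in $\phi_\epsilon$ is bounded away from zero, so $\phi_\epsilon$ is smooth; consequently $\Phi_\epsilon(G(v),H(v))$, and hence $F_\epsilon$, are continuously differentiable with a locally Lipschitz Jacobian $J_rF_\epsilon$ (indeed $G$ and $H$ are affine by \eqref{def_G_H_v}, so all nonlinearity comes from the smooth $\phi_\epsilon$). This smoothness is exactly the regularity premise needed for both the global and local parts of the cited theorem.

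Next, for the global statement (a), the key point is that the search direction $d^k$ obtained from \eqref{eq_Fr} is a descent direction for $g_\epsilon$. Because $\nabla g_\epsilon(r^k)=(\Gamma^k)^\top F_\epsilon(r^k)$ and $d^k$ solves $\Gamma^kd^k=-F_\epsilon(r^k)$, one computes $\nabla g_\epsilon(r^k)^\top d^k = F_\epsilon(r^k)^\top\Gamma^kd^k = -\|F_\epsilon(r^k)\|_2^2<0$ whenever $r^k$ is not a root, so the Armijo line search in S4 is well defined and terminates. The standard convergence analysis of Armijo-type line search, which is part of Theorem 5.5, then yields that every accumulation point $\bar r$ satisfies $\nabla g_\epsilon(\bar r)=0$, giving conclusion (a).

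Finally, for the local quadratic rate (b), I would invoke Theorem \ref{thm-nonsingularity}: under Assumption \ref{ass_2}, $J_rF_\epsilon(\bar r)$ is nonsingular at the accumulation point $\bar r$ whenever $F_\epsilon(\bar r)=0$. Nonsingularity together with the locally Lipschitz Jacobian is precisely the hypothesis under which Theorem 5.5 guarantees that the unit step $s_k=1$ is eventually accepted and that $\{r^k\}$ converges to $\bar r$ Q-quadratically, establishing (b). The main obstacle I anticipate is matching the hypotheses exactly: one must confirm that the inexact solve of \eqref{eq_Fr} by BICG-STAB still meets the residual (forcing) conditions assumed in \cite{zhang2013perturbation}, so that both the descent property and the quadratic rate are preserved, and that the nonsingularity statement of Theorem \ref{thm-nonsingularity} (which is conditioned on $F_\epsilon(\bar r)=0$) aligns with the accumulation-point hypothesis invoked in part (b).
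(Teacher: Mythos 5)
Your proposal is correct and takes essentially the same route as the paper: the paper's own proof is a one-line appeal to Theorem 5.5 of \cite{zhang2013perturbation}, justified by the nonsingularity of $J_rF_\epsilon(\bar r)$ established in Theorem \ref{thm-nonsingularity}. The extra verifications you supply (smoothness of $F_\epsilon$ for fixed $\epsilon>0$, the descent computation $\nabla g_\epsilon(r^k)^\top d^k=-\|F_\epsilon(r^k)\|_2^2$, and the caveat that the inexact BICG-STAB solve must meet the forcing conditions of the cited theorem) are details the paper leaves implicit, and they are sound.
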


\begin{proof}
    This results follow directly by noting that $J_r F_\epsilon(\bar r)$ is nonsingular in Theorem \ref{thm-nonsingularity}.
\end{proof}

To close this section, we designed the damped Newton method to solve the subproblem \eqref{pb_nlp} by its KKT conditions \eqref{pb_kkt}. Theorem \ref{thm_C} guarantees SDNM converges to an C-stationary point under MPEC-LICQ, and Theorem \ref{thm-convergence} implies that the damped Newton method enjoys a quadratic convergence rate under Assumption \ref{ass_2}.

\section{Numerical Experiments}
In this part, we will conduct extensive numerical tests to verify the efficiency of SDNM.
First, we present the cross-validation algorithm for selecting the hyperparameter $C$ in SVC \cite{Li_05}, as shown in Algorithm \ref{algo3}. The numerical tests are conducted in Matlab R2018b on a Windows 11 Lenovo Laptop with an 13th Gen Intel(R) Core(TM) i5-13500H 2.60 GHz and 32 GB of RAM. All the datasets are collected from the LIBSVM library\footnote{https://www.csie.ntu.edu.tw/cjlin/libsvmtools/datasets/}. 
The data descriptions are shown in Table 1.

\begin{algorithm}[H]
	\caption{The Cross-Validation Algorithm}\label{algo3}
	\begin{algorithmic}[1]
		\vskip 1mm
		\State
		Given $T$, split the data set into a subset $\Omega$ with $p_1$ points and a hold-out test set $\Theta$ with $p_2$ points. The set $\Omega$ is equally partitioned into $T$ pairwise disjoint subsets, one for each fold.
		\vskip 1mm
		\State 
		$\textbf{Find}$ an optimal hyperparameter $C$ by the damped Newton method in Algorithm \ref{algo1}.
		\vskip 1mm
		\State
		$\textbf{Post-processing procedure.}$ The regularization hyperparameter $\hat{C}$ is rescaled by a factor $T/(T-1)$. This gives the final classifier $\hat{w}$.
		\vskip 1mm
	\end{algorithmic}
\end{algorithm}

\begin{table}[H]\label{tab_Descriptions}
\renewcommand{\arraystretch}{1.1}
\setlength{\abovecaptionskip}{0cm}
\caption{\centering{Descriptions of datasets}}
\begin{center}
\begin{tabular}{ c p{7em} r r r | c p{4.5em} r r r} 
\specialrule{0.1em}{0pt}{0pt}																			
No.	&	Dataset	&	$p_1$	&	$p_2$	&	$n$	&	No.	&	Data set	&	$p_1$	&	$p_2$	&	$n$	\\
\specialrule{0.1em}{0pt}{0pt}																			
1	&	diabetes	&	300	&	468	&	8	&	7	&	phishing	&	300	&	1700	&	68	\\
2	&	heart	&	150	&	120	&	13	&	8	&	w1a	&	450	&	750	&	300	\\
3	&	australian	&	300	&	390	&	14	&	9	&	w2a	&	450	&	750	&	300	\\
4	&	german.number	&	300	&	700	&	24	&	10	&	w3a	&	450	&	750	&	300	\\
5	&	ionosphere	&	240	&	111	&	34	&	11	&	w4a	&	450	&	750	&	300	\\
6	&	sonar	&	150	&	58	&	60	&	12	&	w5a	&	450	&	750	&	300	\\
\specialrule{0.1em}{0pt}{0pt}																			
\end{tabular}
\end{center}
\end{table}

\subsection{The performance of SDNM}
In this subsection, we discuss about SDNM from the following two aspects:
(i) The decrease of $\|F_\epsilon(r^k)\|_2$;
(ii) Verification of Assumption \ref{ass_2}.

\subsubsection{The decrease of $\|F_\epsilon(r^k)\|_2$}
Figure \ref{Fig. RE} demonstrates the typical decrease of $\|F_\epsilon(r^k)\|_2$ for some data sets, where one can indeed observe the quadratic convergence rate of SDNM. Each $\star$ indicates the end of an inner loop.

\begin{figure}[H]
	\centering
	\begin{minipage}{0.45\linewidth}
		\centering
		\includegraphics[width=1\linewidth]{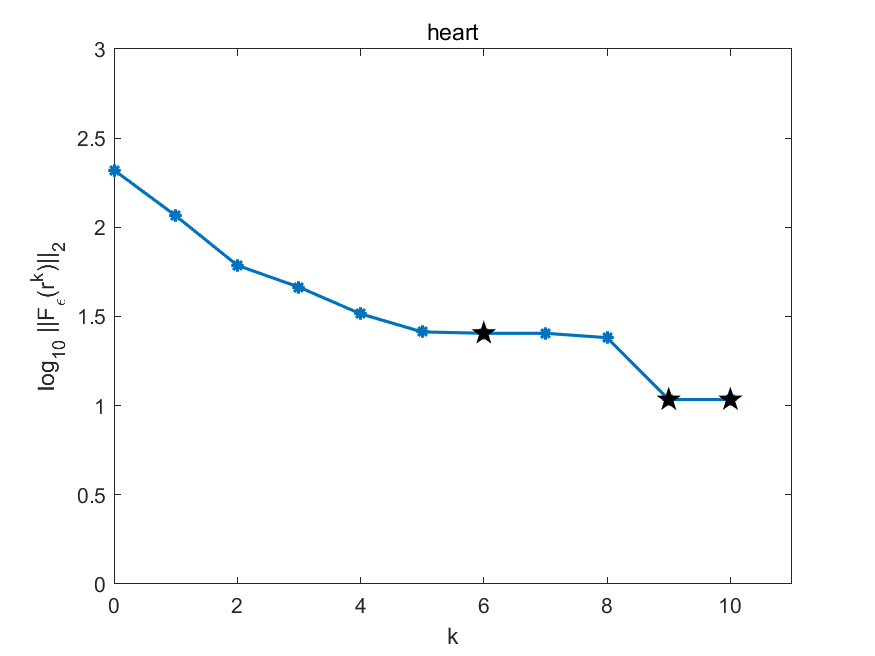}
	\end{minipage}
	\begin{minipage}{0.45\linewidth}
		\centering
		\includegraphics[width=1\linewidth]{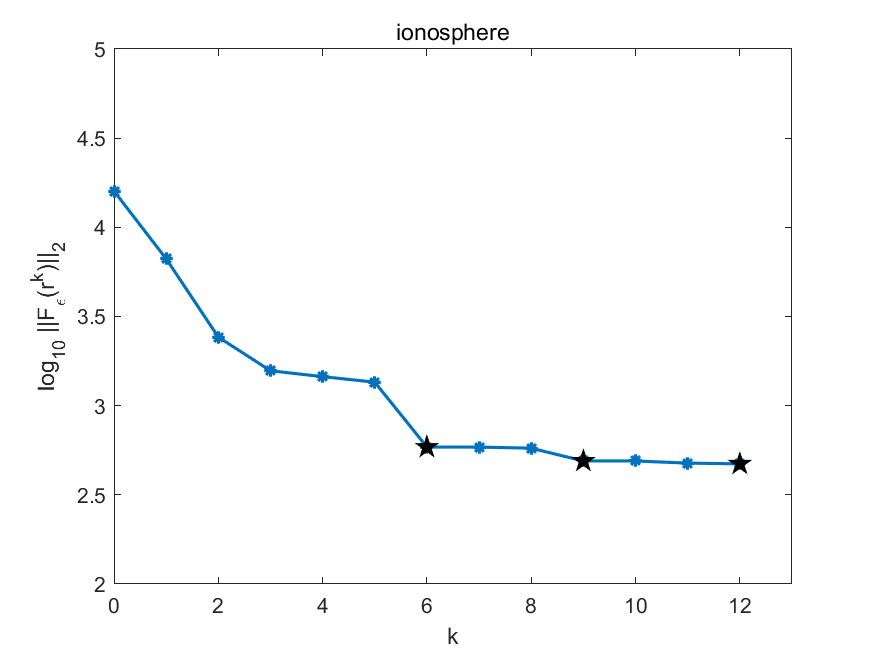}
	\end{minipage}
	\begin{minipage}{0.45\linewidth}
		\centering
		\includegraphics[width=1\linewidth]{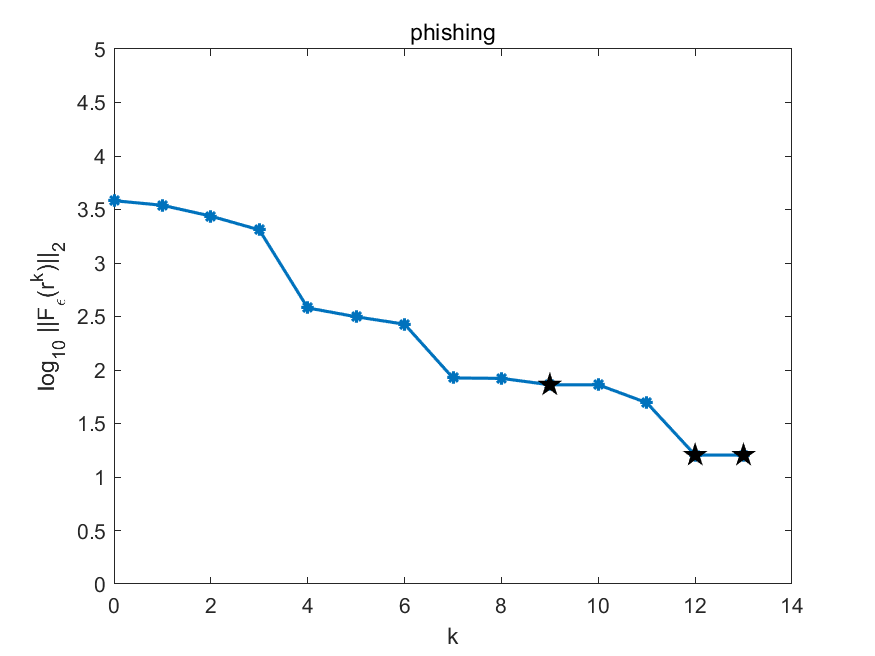}
	\end{minipage}
	\begin{minipage}{0.45\linewidth}
		\centering
		\includegraphics[width=1\linewidth]{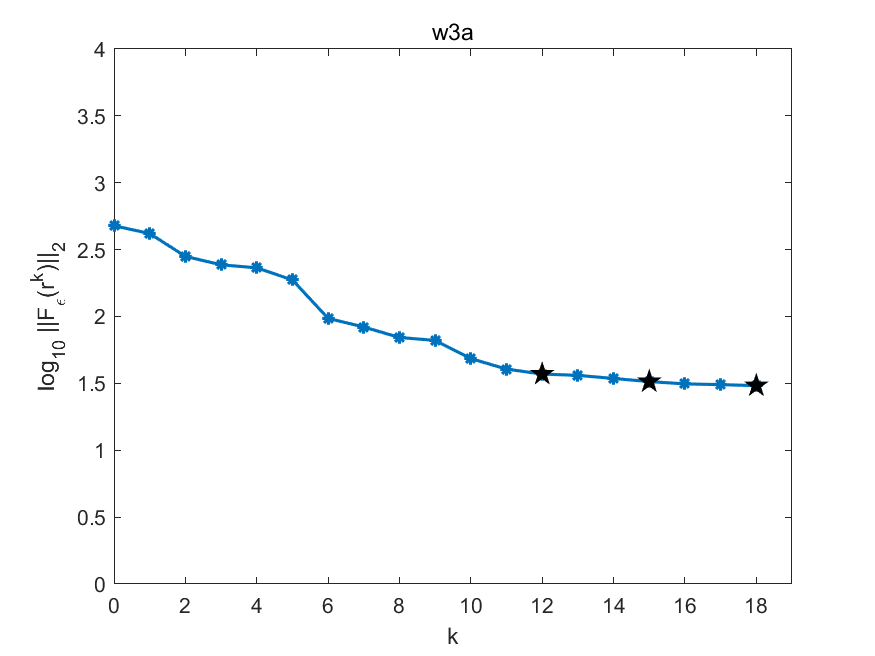}
	\end{minipage}
	\caption{\centering $\log_{10} \|F_\epsilon(r^k)\|_2$ along iterations by SDNM} 
	\label{Fig. RE} 
\end{figure}

\subsubsection{Verification of Assumption \ref{ass_2}}
One can observe from Table 2 that for most data set, the outer loop reaches the stopping criteria within 30 iterations by SDNM. We define $A_2:=\bar{r}^\top_v \nabla^2_{ v  v }L_\epsilon( \bar{v} ,\bar{\lambda}) \bar{r}_v $, and it can be seen that $A_2$ is always positive, which satisfies Assumption \ref{ass_2}, implying that for each $\epsilon$, the damped Newton method provides strict local minimizers of \eqref{pb_nlp}. 


\begin{table}[H]\label{tab_a2}
\renewcommand{\arraystretch}{1.1}
\setlength{\abovecaptionskip}{0cm}
\caption{\centering{Verification of Assumption \ref{ass_2}}}
\begin{center}
\resizebox{1\textwidth}{!}{
\begin{tabular}{ c p{7em} r r r | c p{4em} r r r} 
\specialrule{0.1em}{0pt}{0pt}																			
No.	&	Data set	&	$C\ $	&	$A_2\ $	&	$(		k		,	iter	)$	&	No.	&	Data set	&	$C\ $	&	$A_2\ $	&	$(		k		,	iter	)$	\\
\specialrule{0.1em}{0pt}{0pt}																			
1	&	diabetes	&	2.93 	&	1483.76 	&	$(	\mathbf{	13	}	,	1064	)$	&	7	&	phishing	&	1.77 	&	31.81 	&	$(	\mathbf{	13	}	,	730	)$	\\
2	&	heart	&	0.50 	&	6.49 	&	$(	\mathbf{	10	}	,	1123	)$	&	8	&	w1a	&	0.03 	&	43.86 	&	$(	\mathbf{	10	}	,	458	)$	\\
3	&	australian	&	3.13 	&	30.65 	&	$(	\mathbf{	10	}	,	1248	)$	&	9	&	w2a	&	5.20 	&	488.09 	&	$(	\mathbf{	11	}	,	582	)$	\\
4	&	german.number	&	15.00 	&	997.64 	&	$(	\mathbf{	25	}	,	619	)$	&	10	&	w3a	&	0.08 	&	87.95 	&	$(	\mathbf{	18	}	,	2119	)$	\\
5	&	ionosphere	&	8.55 	&	889.06 	&	$(	\mathbf{	12	}	,	915	)$	&	11	&	w4a	&	2.31 	&	4761.02 	&	$(	\mathbf{	14	}	,	1391	)$	\\
6	&	sonar	&	7.82 	&	428.05 	&	$(	\mathbf{	15	}	,	939	)$	&	12	&	w5a	&	0.07 	&	58.98 	&	$(	\mathbf{	20	}	,	1871	)$ \\
\specialrule{0.1em}{0pt}{0pt}																			
\end{tabular}}
\end{center}
\end{table}

\subsection{Comparison with other methods}

We set the initial values of the parameters of SDNM as $C=1,\ \rho=0.5,\ \sigma=1\times{10}^{-4}$. We compare our method with the following two methods:
\begin{itemize}
\item \textbf{\textit{fmincon}}, solve \eqref{pb_mpec} directly with a Matlab built-in function \textit{fmincon}.
\item \textbf{SGRM}, solve \eqref{pb_mpec} by Scholtes global relaxation method proposed in \cite{Li_05}. SNOPT \cite{snopt} is used to solve the subproblem in each iteration $k$ (given $s_k>0$):
\begin{equation}\label{pb_mpecr}\tag{\text{NLP$_{s_k}$}}
\begin{array}{cl}
\min\limits_{v\in\mathbb{R}^{m+1}} & F(v)\\
\mathrm{s.t.}& G_{i}(v)\geq0,\ \forall i=1,\cdots,m,\\
&H_{i}(v)\geq0,\ \forall i=1,\cdots,m,\\
&G_{i}(v)H_{i}(v)\leq s_{k},\ \forall i=1,\cdots,m. 
\end{array}
\end{equation}
\end{itemize}

The initial points in \textit{fmincon} and SGRM are the same as those in SDNM. 



We compare the aforementioned methods in the following three aspects:
\begin{itemize}
\item [(a)] CPU time, which is the time of hyperparameter optimization solving by aforementioned methods with the given data set in Table 1. 
\item [(b)] Test error ($E_{te}\left(\%\right) $) as defined by $E_{te}=\frac{1}{p_2} \sum _{(x_i,y_i)\in\Theta} \lvert {\rm sign} \left( x_i^\top w^* \right) -y_i \rvert$,
where $\Theta$ is the test set.
\item[(c)] Cross validation error ($E_{CV}\left(\%\right)$) as defined in the objective function of problem \eqref{pb_bilevel}.
\end{itemize}

The results are reported in Table 3, where we mark the winners of $t$, $E_{te}\left(\%\right)$ and $E_{CV}\left(\%\right)$ in bold. ($\beta_1$,$\beta_2$) represents the number of dimensions of variable and the number of constraints in the optimization problem of each methods. As shown in Table 3, SDNM performs better than other methods in terms of test error ($E_{te}(\%)$) and CV error ($E_{CV}\left(\%\right)$), implying that this method has superior generalization performance on classification prediction and solving the over-fitting problem.

In terms of the CPU time summarized in Figure \ref{Fig. t}, SDNM takes the shortest time among the three methods in all data sets. In order to demonstrate how much the SDNM accelerates the optimization process, in Table 3, we also present the time ratio, which shows that SNDM is always faster than the better one of the existing approaches (\textit{fmincon} and SGRM), and is about twice to 20 times as fast as the slower one. In particular, SGRM does well in small-scale data sets (No.1-7), and \textit{fmincon} is competitive in dealing with large scale data sets (No.8-12), but SDNM proposed in this paper is the winner of all of the datasets. For example, dataset w1a (No.8) with 450 cross-validation data points and 300 features can return a solution of hyperparameter in 10 seconds, which is over 20 times faster than SGRM and over 3 times faster than \textit{fmincon}.

\begin{table}[htbp]
\caption{\centering{Computational results for $T$ = 3}} \label{tab_results} 
\begin{center}
\resizebox{0.9\textwidth}{!}{
\begin{tabular}{ c l l c c c c c c} 
\specialrule{0.1em}{0pt}{0pt}																											
No.	&	Dataset	&	Method	&	$(	\beta_1	,	\beta_2	)$	&	$C$	&	CPU time	&	time ratio	&		$E_{te}$		&		$E_{CV}$		\\
\specialrule{0.1em}{0pt}{0pt}																											
1	&	diabetes	&	\textit{fmincon}	&	$(	1801	,	5400	)$	&	9.80 	&		16.5 		&	1.99 	&	$\mathbf{	23.5 	}$	&	$\mathbf{	22.7 	}$	\\
	&		&	SGRM	&	$(	1801	,	5400	)$	&	1.50 	&		8.7 		&	1.05 	&		23.7 		&		23.7 		\\
	&		&	SDNM	&	$(	1801	,	1800	)$	&	2.93 	&	$\mathbf{	8.3 	}$	&	1.00 	&		24.1 		&		23.0 		\\
\hline																											
2	&	heart	&	\textit{fmincon}	&	$(	901	,	2700	)$	&	11.81 	&		3.4 		&	2.90 	&		18.3 		&	$\mathbf{	16.0 	}$	\\
	&		&	SGRM	&	$(	901	,	2700	)$	&	1.62 	&		1.7 		&	1.45 	&		17.5 		&		18.0 		\\
	&		&	SDNM	&	$(	901	,	900	)$	&	0.50 	&	$\mathbf{	1.2 	}$	&	1.00 	&	$\mathbf{	15.8 	}$	&		18.0 		\\
\hline																											
3	&	australian	&	\textit{fmincon}	&	$(	1801	,	5400	)$	&	4.83 	&		17.7 		&	2.32 	&	$\mathbf{	14.6 	}$	&	$\mathbf{	13.3 	}$	\\
	&		&	SGRM	&	$(	1801	,	5400	)$	&	1.46 	&		15.9 		&	2.09 	&	$\mathbf{	14.6 	}$	&		14.0 		\\
	&		&	SDNM	&	$(	1801	,	1800	)$	&	3.13 	&	$\mathbf{	7.6 	}$	&	1.00 	&		15.1 		&		13.7 		\\
\hline																											
4	&	german.	&	\textit{fmincon}	&	$(	1801	,	5400	)$	&	25.15 	&		18.3 		&	2.08 	&		25.3 		&	$\mathbf{	26.7 	}$	\\
	&	number	&	SGRM	&	$(	1801	,	5400	)$	&	1.40 	&		12.1 		&	1.38 	&	$\mathbf{	24.7 	}$	&		28.7 		\\
	&		&	SDNM	&	$(	1801	,	1800	)$	&	15.00 	&	$\mathbf{	8.8 	}$	&	1.00 	&		25.3 		&	$\mathbf{	26.7 	}$	\\
\hline																											
5	&	ionosphere	&	\textit{fmincon}	&	$(	1441	,	4320	)$	&	8.76 	&		11.6 		&	2.23 	&	$\mathbf{	2.7 	}$	&		26.7 		\\
	&		&	SGRM	&	$(	1441	,	4320	)$	&	1.50 	&		5.5 		&	1.06 	&		4.5 		&	$\mathbf{	25.4 	}$	\\
	&		&	SDNM	&	$(	1441	,	1440	)$	&	8.55 	&	$\mathbf{	5.2 	}$	&	1.00 	&	$\mathbf{	2.7 	}$	&		26.7 		\\
\hline																											
6	&	sonar	&	\textit{fmincon}	&	$(	901	,	2700	)$	&	13.39 	&		4.4 		&	2.82 	&	$\mathbf{	27.6 	}$	&		30.0 		\\
	&		&	SGRM	&	$(	901	,	2700	)$	&	2.64 	&		2.8 		&	1.77 	&	$\mathbf{	27.6 	}$	&		30.0 		\\
	&		&	SDNM	&	$(	901	,	900	)$	&	7.82 	&	$\mathbf{	1.6 	}$	&	1.00 	&	$\mathbf{	27.6 	}$	&	$\mathbf{	29.3 	}$	\\
\hline																											
7	&	phishing	&	\textit{fmincon}	&	$(	1801	,	5400	)$	&	5.64 	&		18.0 		&	3.08 	&	$\mathbf{	6.4 	}$	&	$\mathbf{	7.0 	}$	\\
	&		&	SGRM	&	$(	1801	,	5400	)$	&	1.54 	&		17.4 		&	2.97 	&	$\mathbf{	6.4 	}$	&		8.3 		\\
	&		&	SDNM	&	$(	1801	,	1800	)$	&	1.77 	&	$\mathbf{	5.9 	}$	&	1.00 	&		6.9 		&		9.0 		\\
\hline																											
8	&	w1a	&	\textit{fmincon}	&	$(	2701	,	8100	)$	&	3.46 	&		35.7 		&	3.93 	&		3.5 		&	$\mathbf{	3.6 	}$	\\
	&		&	SGRM	&	$(	2701	,	8100	)$	&	1.50 	&		231.6 		&	25.52 	&		3.3 		&		3.8 		\\
	&		&	SDNM	&	$(	2701	,	2700	)$	&	0.03 	&	$\mathbf{	9.1 	}$	&	1.00 	&	$\mathbf{	3.1 	}$	&		3.8 		\\
\hline																											
9	&	w2a	&	\textit{fmincon}	&	$(	2701	,	8100	)$	&	3.46 	&		36.1 		&	1.91 	&	$\mathbf{	3.2 	}$	&		3.6 		\\
	&		&	SGRM	&	$(	2701	,	8100	)$	&	1.51 	&		137.1 		&	7.26 	&	$\mathbf{	3.2 	}$	&		3.6 		\\
	&		&	SDNM	&	$(	2701	,	2700	)$	&	5.20 	&	$\mathbf{	18.9 	}$	&	1.00 	&		3.3 		&	$\mathbf{	3.3 	}$	\\
\hline																											
10	&	w3a	&	\textit{fmincon}	&	$(	2701	,	8100	)$	&	4.06 	&		37.1 		&	1.01 	&		2.4 		&		3.3 		\\
	&		&	SGRM	&	$(	2701	,	8100	)$	&	1.50 	&		250.8 		&	6.82 	&		2.4 		&		3.3 		\\
	&		&	SDNM	&	$(	2701	,	2700	)$	&	0.08 	&	$\mathbf{	36.8 	}$	&	1.00 	&	$\mathbf{	2.1 	}$	&	$\mathbf{	2.9 	}$	\\
\hline																											
11	&	w4a	&	\textit{fmincon}	&	$(	2701	,	8100	)$	&	3.64 	&		36.6 		&	1.23 	&	$\mathbf{	3.1 	}$	&	$\mathbf{	3.1 	}$	\\
	&		&	SGRM	&	$(	2701	,	8100	)$	&	1.48 	&		236.0 		&	7.92 	&	$\mathbf{	3.1 	}$	&	$\mathbf{	3.1 	}$	\\
	&		&	SDNM	&	$(	2701	,	2700	)$	&	2.31 	&	$\mathbf{	29.8 	}$	&	1.00 	&	$\mathbf{	3.1 	}$	&	$\mathbf{	3.1 	}$	\\
\hline																											
12	&	w5a	&	\textit{fmincon}	&	$(	2701	,	8100	)$	&	3.66 	&		37.0 		&	1.01 	&		4.1 		&		4.2 		\\
	&		&	SGRM	&	$(	2701	,	8100	)$	&	1.52 	&		257.2 		&	7.02 	&		4.1 		&		4.4 		\\
	&		&	SDNM	&	$(	2701	,	2700	)$	&	0.07 	&	$\mathbf{	36.6 	}$	&	1.00 	&	$\mathbf{	2.4 	}$	&	$\mathbf{	3.1 	}$	\\
\specialrule{0.1em}{0pt}{0pt}										
\end{tabular}}
\end{center}
\end{table}

\begin{figure}[htbp]
	\centering
	\setlength{\abovecaptionskip}{0.1cm}
	\begin{minipage}{0.85\linewidth}
		\centering
		\includegraphics[width=0.95\linewidth]{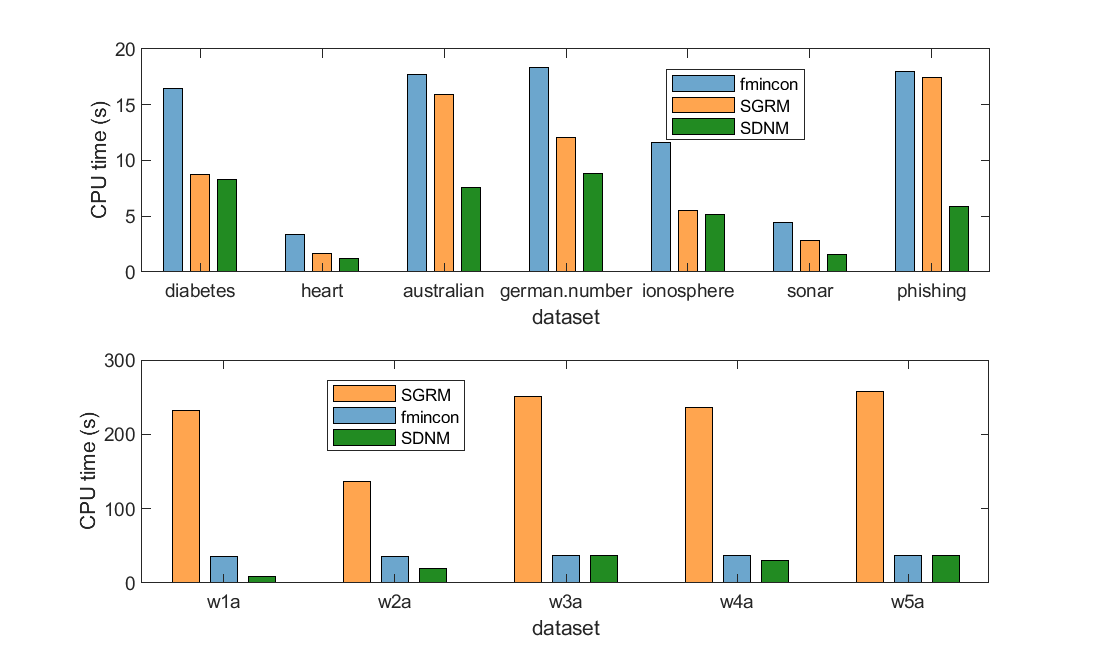}
	\end{minipage}
	\caption{\centering CPU time of three methods} 
	\label{Fig. t} 
\end{figure}

Overall, SDNM demonstrates high efficiency in practice with superior generalization performance on classification prediction.

\section{Conclusion}\label{sec6}

In this paper, we develop an efficient algorithm to solve the \eqref{pb_mpec} derived from the bilevel optimization model for hyperparameter selection for SVC. Specifically, we smoothed the complementarity constraints in the \eqref{pb_mpec} and proposed the smoothing damped Newton method to solve it. 
Under certain assumption, the proposed SDNM converges to C-stationary point under MPEC-LICQ with subproblem enjoys a quadratic convergence rate under proper assumptions.
Finally, extensive numerical results on the datasets from the LIBSVM library verified the efficiency of SDNM over almost all the datasets used in this paper comparing with the other two methods. 
SDNM demonstrates high efficiency in practice with superior generalization performance on classification prediction and solving the over-fitting problem, and is always faster than the existing approaches.

\bibliographystyle{plain}
\bibliography{Reference}

\section*{Conflict of interest}
The authors declare that they have no conflict of interest.

\end{document}